\title{Density estimation via periodic scaled Korobov kernel method with exponential decay condition\thanks{Submitted to the editors DATE.
\funding{The work of the first and third authors was funded by the National Science Foundation of China (grant 72571153). The work of the fourth authors was funded by the Guangdong Basic and Applied Basic Research
Foundation (grant
2025A1515011888 and 2024A1515011876).}}}
\author{Ziyang Ye\thanks{Department of Mathematical Sciences, Tsinghua University, Beijing 100084, People’s Republic
of China 
(\email{yzy23@mails.tsinghua.edu.cn},\email{wangxiaoqun@mail.tsinghua.edu.cn}).}
\and Haoyuan Tan\thanks{School of Mathematics, South China University of Technology,
Guangzhou 510641, People's Republic of China
  (\email{202410188273@mail.scut.edu.cn}).}
  \and Xiaoqun Wang\footnotemark[2]
\and Zhijian He\thanks{Corresponding author. School of Mathematics, South China University of Technology,
Guangzhou 510641, People's Republic of China (\email{hezhijian@scut.edu.cn}).}
}
\begin{document}
\maketitle

\begin{abstract}
We propose the periodic scaled Korobov kernel (PSKK) method for nonparametric density estimation on 
$\mathbb{R}^d$. By first wrapping the target density into a periodic version through modulo operation and subsequently applying kernel ridge regression in scaled Korobov spaces, we extend the kernel approach proposed by Kazashi and Nobile (SIAM J. Numer. Anal., 2023) and eliminate its requirement for inherent periodicity of the density function. This key modification enables effective estimation of densities defined on unbounded domains. We establish rigorous mean integrated squared error (MISE) bounds, proving that for densities with smoothness of order $\alpha$ and exponential decay, our PSKK method achieves an $\mathcal{O}(M^{-1/(1+1/(2\alpha)+\epsilon)})$ MISE convergence rate with an arbitrarily small $\epsilon>0$. While matching the convergence rate of the previous kernel approach, our method applies to non-periodic distributions at the cost of stronger differentiability and exponential decay assumptions. Numerical experiments confirm the theoretical results and demonstrate a significant improvement over traditional kernel density estimation in large-sample regimes.

\end{abstract}

\begin{keywords}
density estimation, periodization, kernel method, scaled Korobov space
\end{keywords}

\begin{MSCcodes}
62G07, 65D40
\end{MSCcodes}

\section{Introduction}
Density estimation is a fundamental challenge in statistics with broad applications in uncertainty quantification \cite{ditkowski2020density,mcdonald2021review,nasri2025novel,rajendran2020uncertainty}, machine learning \cite{amrani2021noise,steininger2021density,wang2019nonparametric,zandieh2023kdeformer}, and data analysis \cite{alquicira2021nebulosa,d2021automatic,hallin2022classifying,silverman1986density}. Unlike parametric methods \cite{varanasi1989parametric}, which assume a specific form for the distribution (e.g., Gaussian), the density estimation in which we are interested are often non-parametric or semi-parametric, allowing them to adapt to the complexity of the data without strong assumptions about the distribution. Classical methods, such as kernel density estimation (KDE), are effective in low dimensions but may suffer from the curse of dimensionality \cite{chen2017tutorial}. Generally speaking, it is difficult to solve density estimation problems exceeding 6 dimensions using KDE methods. Additionally, traditional methods often perform poorly when the density function is multi-modal \cite{silverman1986density}.

With these challenges in mind, in this paper we consider the density estimation problem over $\mathbb{R}^d$. Let $(\Omega,\mathcal{F},\mathbb{P})$ be a probability space and $(\mathbb{R}^d,\mathcal{B}(\mathbb{R}^d))$ be a measurable space, where $\mathcal{B}(\mathbb{R}^d)$ denotes the Borel sets in $\mathbb{R}^d$. Given $M$ independent random vectors $Y_1,\ldots,Y_M:\Omega\to \mathbb{R}^d$ that follow an identical distribution defined by a density $f$ with respect to the Lebesgue measure on $\mathbb{R}^d$, we aim to obtain an estimator $\bar{f}$ so that the mean integrated squared error (MISE), defined as 
\begin{equation}
    \mathbb{E}\left[\int_{\mathbb{R}^d}|\bar{f}(\bm{x})-f(\bm{x})|^2d\bm{x}\right],\nonumber
\end{equation}
is small. For this error, traditional methods like KDE with a second-order kernel achieves an convergence rate of $\mathcal{O}(M^{-\frac{4}{d+4}})$ for the asymptotic MISE (see, for example, \cite[page 100]{wand1994kernel}), which degrades exponentially as the dimension $d$ increases. Beyond this traditional approach, numerous alternative density estimation methods have emerged, such as the maximum a posteriori (MAP) estimator \cite{griebel2010finite,wong2012maximum}, techniques based on minimizing Fisher divergence \cite{sriperumbudur2017density}, supervised deep learning approaches \cite{bos2024supervised}, and deep generative neural networks \cite{liu2021density}. While these methods offer advantages in overcoming scalability or adaptability limitations associated with KDE, a quantitative analysis of their MISE convergence rates in general dimensions remains largely lacking.

In order to obtain a higher MISE convergence rate, a regularized least squares approach is considered in many works \cite{hegland2000finite,kazashi2023density,peherstorfer2014density,roberts2008note}. This approach supposes that the density function $f$ lies in some inner product space $W$ and want to compute finite-sample approximations of $f$ by discretizing the minimization problem
\begin{equation}\label{min_pro}
    \min_{g\in V}\Vert f-g\Vert^2_{L^2(\mathbb{R}^d)}+\lambda\Vert g\Vert^2_W,
\end{equation}
where $V$ is a finite dimensional subspace of $W$, $\lambda>0$ is a regularization parameter, and $\Vert\cdot\Vert_W$ denotes the norm induced by the inner product $\left\langle\cdot,\cdot\right\rangle_{W}$ on $W$. 
The discretized minimization problem~(\ref{min_pro}) is equivalent to find $g\in V$ such that
\begin{equation}\label{approx_tpye}
    \left\langle g,v\right\rangle_{L^2(\mathbb{R}^d)}+\lambda \left\langle g,v\right\rangle_{W}=\frac{1}{M}\sum_{m=1}^{M}v(Y_m),\quad\text{for all }v\in V.
\end{equation}
The choice of the finite-dimensional subspace $V$ fundamentally determines the form of the approximation and influences its convergence behavior. Different approaches have employed diverse subspaces, including finite element subspaces \cite{hegland2000finite}, sparse grid function spaces \cite{roberts2008note}, and linear span of hat functions centered at adaptive sparse grids \cite{peherstorfer2014density}. It is worth noting that the density estimation in these studies (\cite{hegland2000finite, peherstorfer2014density, roberts2008note}) is confined to bounded domains rather than the entire $\mathbb{R}^d$. Among them, only \cite{roberts2008note} derives an MISE convergence rate of $\mathcal{O}(|\log M|^{2d}M^{-\frac{4}{5}})$, which does not attain the optimal lower bound $\mathcal{O}(M^{-1})$ given in \cite{boyd1978lower}.

More recently, Kazashi and Nobile \cite{kazashi2023density} applied the approximation~(\ref{approx_tpye}) to density functions in the reproducing kernel Hilbert space (RKHS). This approach is widely used in density ratio estimation \cite{kanamori2009least,kanamori2012statistical,sugiyama2012density,zellinger2023adaptive} but remains relatively rare for obtaining estimators and analyzing MISE in density estimation problems. In Kazashi and Nobile's setting the function space $W$ is an RKHS with a reproducing kernel $K:D\times D\to \mathbb{R}$, where $D$ is a general set, and the finite-dimensional subspace $V$ is chosen to be 
\begin{equation}\label{V_N}
    V=V(X)= \text{span}\{K(\bm{x}_k,\cdot)|k=1,\ldots,N\},
\end{equation}
where $X=\{\bm{x}_1,\ldots,\bm{x}_N\}\subset D$ is a preselected point set. They developed a theoretical framework for analyzing MISE for a general reproducing kernel $K$ that admits an $L^2$-orthonormal expansion. In particular, for the density function $f$ in the weighted Korobov space over $[0,1]^d$, they proved that the total MISE is $\mathcal{O}(M^{-1/(1+1/(2\alpha)+\epsilon)})$, where $\alpha$ is the smoothness parameter of the Korobov space and $\epsilon>0$ is an arbitrary positive number. However, the assumption that $f$ lies in the weighted Korobov space forces the density function $f$ to be a periodic function on $[0,1]^d$, which limits the applicability of the error bound.

To overcome the limitations associated with periodicity and compact support of the density functions discussed in \cite{kazashi2023density}, we propose the periodic scaled Korobov kernel (PSKK) method. Specifically, within the density estimation framework of \cite{kazashi2023density}, we employ the kernel $K=K_{\alpha,a,d} :[-a,a]^d\to\mathbb{R}$, which is the kernel of the scaled Korobov space over $[-a,a]^d$ with smoothness parameter $\alpha$, as defined by Nuyens and Suzuki \cite{nuyens2023scaled}. Similar to \cite{kazashi2023density}, we construct the pre-selected point set $X$ using the component-by-component (CBC) method from \cite{cools2020lattice}, with the additional step of linearly scaling the resulting lattice point set into $[-a,a]^d$. We then establish the MISE bound with the scaling parameter $a$. In order to handle the periodicity that Korobov spaces need, we artificially map the random vector to $[-a,a)^d$ via modulo $2a$ operation. More specifically, for each $1\le m \le M$, we transform the random vector $Y_m=(Y_{m,1},\ldots,Y_{m,d})^\top$ into $\widetilde{Y}_m=(\widetilde{Y}_{m,1},\ldots,\widetilde{Y}_{m,d})^\top$ by defining
\begin{equation}
    \widetilde{Y}_{m,j}= (Y_{m,j}\ \text{mod $2a$)}-a\in [-a,a),\nonumber
\end{equation}
for all $1\le j\le d$. Then $\widetilde{Y}_1,\ldots,\widetilde{Y}_M$ are iid random vectors with periodic density function
\begin{equation}\label{wrapped_f}
    \widetilde{f}(\bm{x})=\sum_{\bm{k}\in\mathbb{Z}^d}f(\bm{x}+2a\bm{k}),\quad\forall\bm{x}\in[-a,a]^d,
\end{equation}
where $\mathbb{Z}$ denotes the set of integers. Such formula for $\widetilde{f}$ can be found in \cite[page 196]{nodehi2021estimation}.  In this paper, we consider the density $f$ satisfying the exponential decay condition in \cite[equation (22)]{nuyens2023scaled}, which guarantees the uniform convergence on $[-a,a]^d$ of the series in \eqref{wrapped_f}. 

Notably, the periodic density function $\widetilde{f}$ can be interpreted as the wrapped version of $f$, a concept frequently employed in toroidal density estimation (see references \cite{agostinelli2007robust,di2011kernel,nodehi2021estimation}). Unlike existing approaches that passively estimate the wrapped density function on a given fixed torus, our method actively transforms the original density $f$ into its wrapped version $\widetilde{f}$ on an adaptive torus whose size scales with the sample size $M$, and then performs the density estimation on $\widetilde{f}$. By allowing the scaling parameter $a$ to grow with $M$, we ensure that $\widetilde{f}$ converges to $f$ with increasing accuracy. Specifically, if $f$ satisfies the exponential decay condition up to order $\alpha$  (see \cite[equation (22)]{nuyens2023scaled}), we prove that the discrepancy between $\widetilde{f}$ and $f$ decays exponentially with $a$. Based on this result we apply the approximation~(\ref{approx_tpye}) to $\widetilde{f}$ as follows:
find $\widetilde{f}^\lambda_{\widetilde{\bm{Y}}}\in V_N$ such that 
\begin{equation}\label{intro_PSKK}
    \left\langle\widetilde{f}^\lambda_{\widetilde{\bm{Y}}},v\right\rangle_{L^2([-a,a]^d)} + \lambda \left\langle \widetilde{f}^\lambda_{\widetilde{\bm{Y}}},v\right\rangle_{K_{\alpha,a,d}}=\frac{1}{M}\sum_{m=1}^{M}v(\widetilde{Y}_m),\quad\text{for all } v\in V_N,
\end{equation}
where $\langle\cdot,\cdot\rangle_{K_{\alpha,a,d}}$ denotes the inner product in the scaled Korobov space over $[-a,a]^d$, $\widetilde{\bm{Y}}:=\{\widetilde{Y}_1,\ldots,\widetilde{Y}_{M}\}$, $\lambda>0$ is a ``regularization'' parameter, and
\begin{equation*}
    V_N :=V_N(X):= \text{span}\{K_{\alpha,a,d}(\bm{x}_k,\cdot)|k=1,\ldots,N\},
\end{equation*}
with a preselected point set $X=\{\bm{x}_1,\ldots,\bm{x}_N\}\subset [-a,a]^d$. 
We then define the PSKK estimator
\begin{equation}
    \bar{f}(\bm{x})=f^{\lambda}_{\bm{Y}}(\bm{x}):=\begin{cases}
    \max\{\widetilde{f}^\lambda_{\widetilde{\bm{Y}}}(\bm{x}),0\}, &\text{ for }\bm{x}\in[-a,a]^d,\\
    0, &\text{ for }\bm{x}\in\mathbb{R}^d\setminus[-a,a]^d,
    \end{cases}\nonumber
\end{equation}
based on the sample set $\bm{Y}:=\{Y_1,\ldots,Y_M\}$ and establish that with appropriate choices of $a, N$ and $\lambda$, the MISE of the PSKK estimator achieves a convergence rate of $\mathcal{O}(|\log M|^{2(\alpha+1)d/q}M^{-1/(1+1/(2\alpha)+\epsilon)})$, where $q\ge 1$ is a parameter related to the decay of the density function and $\epsilon>0$ is arbitrarily small. Notably, although the exponent of the logarithmic factor depends on the dimension $d$, the MISE convergence rate can be made arbitrarily close to $\mathcal{O}(M^{-1})$  provided that $f$ is infinitely differentiable and satisfies the exponential decay condition.

Similar to \cite{kazashi2023density}, the PSKK estimator is not guaranteed to integrate to one. It is worth noting that relaxing the integral constraint is also considered in the context of standard KDE and can sometimes lead to higher convergence rates, as shown in \cite{terrell1980improving}.

The structure of this paper is organized as follows. Section~\ref{sec2} reviews fundamental concepts related to both the standard Korobov spaces and the scaled Korobov spaces \cite{nuyens2023scaled}, as well as providing a detailed revisit of  Kazashi and Nobile's estimator in \cite{kazashi2023density}. Section~\ref{sec3} focuses on density functions in the scaled Korobov spaces and obtains the MISE bound with the scaled parameter $a$. In Section~\ref{sec4}, we apply the density estimator to the wrapped density $\widetilde{f}$ derived from the target density function $f$, and obtain the total MISE bound under the exponential decay assumption. Section~\ref{sec5} discusses some details in implementing our PSKK method. Section~\ref{sec6} provides a portion of numerical experiments to support the theoretical results, while the other portion is presented in the appendix. Finally, in Section~\ref{sec7}, we draw the conclusions of the paper.

\section{Notations and backgrounds}\label{sec2}
Let $(D,\mathcal{B},\mu)$ be a measurable space. We denote by $L^2_{\mu}(D)$ the space of square-integrable functions with respect to the measure $\mu$ over $D$. Specifically, $L^2(D)$ refers to the special case where $D$ is a subset of $\mathbb{R}^d$ and $\mu$ is the Lebesgue measure.  Denote $\mathcal{H}(K)$ as the RKHS with reproducing kernel $K:D\times D\to \mathbb{R}$.
Let $\Vert f\Vert_K$ and $\left\langle f, g\right\rangle_K$ be the corresponding norm of $f\in\mathcal{H}(K)$ and inner product of $f$ and $g$ for $f,g\in\mathcal{H}(K)$. By the reproducibility of $\mathcal{H}(K)$ we have
\begin{equation}\label{reproducing_property}
    \left\langle f, K(\cdot,\bm{y})\right\rangle_K=f(\bm{y}),\quad\forall \bm{y}\in D \text{ and }\forall f\in\mathcal{H}(K).
\end{equation}

To denote the classical partial derivatives of $f$, we adopt the following notations
\begin{equation}
    f^{(\bm{\tau})}(\bm{x})=
    \prod_{j=1}^{d}\frac{\partial^{\tau_j}f}{\partial x_j^{\tau_j}} (\bm{x}),\quad \partial_{u}^{k}f(\bm{x})=\prod_{j\in u}\frac{\partial^{k}f}{\partial x_j^{k}} (\bm{x}),\quad \ \forall\bm{\tau}\in\mathbb{N}_0^d,k\in\mathbb{N}_0,u\subset\{1{:}d\},\nonumber
\end{equation}
where $\mathbb{N}_0$ denotes the set of non-negative integers.

For any $k,s\in \mathbb{Z}$ with $k\le s$, let $\{k{:}s\}=\{k,k+1,\ldots,s\}$. For any subset $u\subset \{1{:}d\}$, let $-u$ denote the complement of $u$, $\bm{y}_u$ denote the vector $(y_j)_{j\in u}$. Additionally, we denote $\bm{1}$ as the $d$-dimensional all-ones vector.

For any $\bm{a}=(a,\ldots,a)^\top\in (0,\infty)^d$, we denote by $\mathcal{J}_a$ the linear scaling  from $[0,1]^d$ to $[-a,a]^d$ given by 
\begin{equation}\label{linear_scaling}
    \mathcal{J}_a(\bm{x}):= 2a\bm{x}-\bm{a},\quad\forall\bm{x}\in[0,1]^d,
\end{equation}
and let $\mathcal{J}_a^{-1}$ be its inverse.

We also introduce the periodicity on $[0,1]^d$ and $[-a,a]^d$.
\begin{definition}
    For a function $h:[0,1]^d\to \mathbb{R}$, we say $h$ is periodic on $[0,1]^d$ if
    \begin{equation*}
        h(x_1,\ldots,x_{j-1},0,x_{j+1},\ldots,x_d)=h(x_1,\ldots,x_{j-1},1,x_{j+1},\ldots,x_d)
    \end{equation*}
    holds for any $\bm{x}\in [0,1]^d$ and any $1\le j\le d$. Furthermore, for $\alpha\in\mathbb{N}$, we say $h$ is $C^{(\alpha,\ldots,\alpha)}$-periodic on $[0,1]^d$ if for any $\bm{\tau}\in\{0{:}\alpha\}^d$, the derivative $h^{(\bm{\tau})}$ is continuous and periodic on $[0,1]^d$.
    
    Similarly, for $a>0$ and a function $h:[-a,a]^d\to \mathbb{R}$, we say $h$ is periodic (or $C^{(\alpha,\ldots,\alpha)}$-periodic) on $[-a,a]^d$ if $h\circ\mathcal{J}_a$ is  periodic (or $C^{(\alpha,\ldots,\alpha)}$-periodic) on $[0,1]^d$.
\end{definition}

\subsection{Korobov space on the unit cube}
This subsection focuses on periodic functions over
 the unit cube $[0,1]^d$ admitting an absolutely convergent Fourier series representation
\begin{equation}
    f(\bm{x})=\sum_{\bm{h}\in\mathbb{Z}^d}\widehat{f}(\bm{h})\exp(2\pi i \langle\bm{h},\bm{x}\rangle),\quad \widehat{f}(\bm{h}):=\int_{[0,1]^d} f(\bm{x})\exp(-2\pi i \langle\bm{h},\bm{x}\rangle)d\bm{x},\nonumber
\end{equation}
where $\langle\cdot,\cdot\rangle$ denotes the Euclidean inner product on $\mathbb{R}^d$. The Korobov space is an RKHS defined as follows.
\begin{definition}
    For $\alpha\in\mathbb{N}$, the space $\mathcal{H}(K_{\alpha,d})$ is a Korobov space on the unit cube $[0,1]^d$, which is an RKHS with the inner product
    \begin{equation}
        \left\langle f, g\right\rangle_{K_{\alpha,d}}:=\sum_{\bm{h}\in\mathbb{Z}^d}\widehat{f}(\bm{h})\overline{\widehat{g}(\bm{h})}r_{\alpha,d}(\bm{h})^2,\nonumber
    \end{equation}
    and the reproducing kernel 
    \begin{equation}
        K_{\alpha,d}(\bm{x},\bm{y}):=\sum_{\bm{h}\in\mathbb{Z}^d}\frac{\exp(2\pi i \langle\bm{h},(\bm{x}-\bm{y})\rangle)}{r_{\alpha,d}(\bm{h})^2}=\prod_{j=1}^{d}\left(1+(-1)^{\alpha+1}\frac{B_{2\alpha}(|x_j-y_j|)}{(2\alpha)!}\right),\nonumber
    \end{equation}
    where $B_{k}(x)$ is the Bernoulli polynomial of degree $k$ and
    \begin{equation}
        r_{\alpha,d}(\bm{h})=\prod_{j=1}^{d}r_{\alpha}(h_j),\quad r_{\alpha}(h_j)
        =\begin{cases}
            1, & \text{for } h_j = 0, \\ 
            |2\pi h_j|^{\alpha}, & \text{for } h_j \neq 0.
         \end{cases}\nonumber
    \end{equation}
\end{definition}

\begin{remark}\label{remark_Kor}
    The Korobov space that we consider here is a special case of the Korobov spaces in \cite{cools2020lattice, kazashi2023density}, obtained by setting their smoothness parameter to be $2\alpha$ and their weights $\gamma_u$ to be $(2\pi)^{-2\alpha|u|}$.
\end{remark}

It should be noted that $\mathcal{H}(K_{\alpha,d})$ contains all functions that are  $C^{(\alpha,\ldots,\alpha)}$-periodic on $[0,1]^d$, and the inner product of Korobov space $\mathcal{H}(K_{\alpha,d})$ can be expressed in the following form
\begin{align*}
  \left\langle f, g\right\rangle_{K_{\alpha,d}}=\sum_{u\subset\{1{:}d\}}\int_{[0,1]^{|u|}}\left(\int_{[0,1]^{d-|u|}}\partial_u^\alpha f(\bm{x})d\bm{x}_{-u}\right)\left(\int_{[0,1]^{d-|u|}}\partial_u^\alpha g(\bm{x})d\bm{x}_{-u}\right)d\bm{x}_u.
\end{align*}

\subsection{Korobov space on a box}

We now define the Korobov space on a box $[-a,a]^d\subset\mathbb{R}^d$ with $a>0$. This corresponds to the scaled Korobov space from \cite{nuyens2023scaled} with the domain specialized to the symmetric box $[-a,a]^d$. Firstly, we shall recall the `scaled' version of trigonometric polynomials in \cite{nuyens2023scaled}. For any $h\in\mathbb{Z}$, let
\begin{equation}
    \varphi_{a,h}(x)=\frac{1}{\sqrt{2a}}\exp\left(2\pi ih\frac{x+a}{2a}\right).\nonumber
\end{equation}
For $\bm{h}\in\mathbb{Z}^d$, let 
$\varphi_{a,d,\bm{h}}(\bm{x})=\prod_{j=1}^{d}\varphi_{a,h_j}(x_j)$. Then $\{\varphi_{a,d,\bm{h}}\}$ constitutes a standard orthonormal system in $L^2([-a,a]^d)$.

Now we consider the `periodic' space on the box $[-a,a]^d$, in which the function has the following form of absolutely converging Fourier
series with respect to $\varphi_{a,d,\bm{h}}$
\begin{equation}
    f(\bm{x})=\sum_{\bm{h}\in\mathbb{Z}^d}\widehat{f}_{a,d}(\bm{h})\varphi_{a,d,\bm{h}}(\bm{x}),\quad \widehat{f}_{a,d}(\bm{h})=\int_{[-a,a]^d}f(\bm{x})\overline{\varphi_{a,d,\bm{h}}(\bm{x})}d\bm{x}.\nonumber
\end{equation}

\begin{definition}
For $\alpha\in\mathbb{N}$, the space $\mathcal{H}(K_{\alpha,a,d})$ is a Korobov space on the box $[-a,a]^d$, which is an RKHS with the inner product
\begin{equation}
    \left\langle f, g\right\rangle_{K_{\alpha,a,d}}:=\sum_{\bm{h}\in\mathbb{Z}^d}\widehat{f}_{a,d}(\bm{h})\overline{\widehat{g}_{a,d}(\bm{h})}r_{\alpha,a,d}(\bm{h})^2,\nonumber
\end{equation}
and the reproducing kernel
\begin{align}
    K_{\alpha,a,d}(\bm{x},\bm{y}):=&\sum_{\bm{h}\in\mathbb{Z}^d}r_{\alpha,a,d}(\bm{h})^{-2}\varphi_{a,d,\bm{h}}(\bm{x})\overline{\varphi_{a,d,\bm{h}}(\bm{y})}\label{kernel_ser}\\
    =&\prod_{j=1}^{d}\left(\frac{1}{(2a)^2}+\frac{(-1)^{\alpha+1}(2a)^{2\alpha-1}}{(2\alpha)!}B_{2\alpha}\left(\frac{|x_j-y_j|}{2a}\right) \right),\nonumber
\end{align}
where
\begin{equation}\label{r}
   r_{\alpha,a,d}(\bm{h}):=\prod_{j=1}^{d}r_{\alpha,a}(h_j),\quad 
    r_{\alpha,a}(h_j)=\begin{cases}
        \sqrt{2a}, &\text{ for }h_j=0,\\
        |\pi h_j/a|^\alpha, &\text{ for }h_j\ne 0.
    \end{cases}
\end{equation}

\end{definition}

Similar to the Korobov space on the unit cube, the Korobov space $\mathcal{H}(K_{\alpha,a,d})$ contains all functions that are $C^{(\alpha,\ldots,\alpha)}$-periodic on $[-a,a]^d$, and the inner product of $\mathcal{H}(K_{\alpha,a,d})$ has the following form
\begin{align*}
&\left\langle f, g\right\rangle_{K_{\alpha,a,d}}\\
=& \sum_{u\subset\{1{:}d\}}\int_{[-a,a]^{|u|}}\left(\int_{[-a,a]^{d-|u|}}\partial_u^\alpha f(\bm{x})d\bm{x}_{-u}\right)\left(\int_{[-a,a]^{d-|u|}}\partial_u^\alpha g(\bm{x})d\bm{x}_{-u}\right)d\bm{x}_u.
\end{align*}

\subsection{Density estimation in RKHS}
In this subsection, with an abuse of notation, we recall the theory of density estimation in general RKHS as presented in \cite{kazashi2023density}. Let $(\Omega,\mathcal{F},\mathbb{P})$ be a probability space and $(D,\mathcal{B})$ be a measurable space, where $D$ is a general set. Suppose independent random vectors $Y_1,\ldots,Y_M:\Omega\to D$ follow an identical distribution defined by a density $f\in\mathcal{H}(K)$ with respect to a measure $\mu$ on $\mathcal{B}$, where $K:D\times D\to \mathbb{R}$ is a reproducing kernel. Thus the sample set $\bm{Y}:=\{Y_1,\ldots,Y_M\}$ consists of points drawn from $D$. The kernel method in \cite{kazashi2023density} consider the estimator
\begin{equation}\label{de_RKHS}
    f^\lambda_{\bm{Y}}(\cdot)= \sum_{k=1}^{N}c_k(\omega)K(x_k,\cdot),
\end{equation}
where $N$ is a positive integer, $X = \{x_1,\ldots,x_N\}$ is a preselected point set in $D$, and the (random) coefficient vector $\bm{c}(\omega):=(c_1(\omega),\ldots,c_N(\omega))^\top\in\mathbb{R}^N$ depends on the sample set $\bm{Y}(\omega)=\{ Y_1(\omega),\ldots,Y_M(\omega)\}$ and the ``regularization'' parameter $\lambda$. Specifically, if we denote 
\begin{equation}
    V_N :=V_N(X):= \text{span}\{K(x_k,\cdot)|k=1,\ldots,N\}\subset\mathcal{H}(K),\nonumber
\end{equation}
then the coefficient vector $\bm{c}(\omega)$ can be determined by specializing the variational problem~(\ref{approx_tpye}) to the RKHS setting as follows
\begin{equation}\label{estimation_RKHS}
\left\langle f_{\bm{Y}}^\lambda,v\right\rangle_{L^2_\mu(D)}+\lambda\left\langle f_{\bm{Y}}^\lambda,v\right\rangle_K=\frac{1}{M}\sum_{m=1}^{M}v(Y_m)\quad \text{for all $v\in V_N$}.
\end{equation}
Consistent with the situation of \cite{kazashi2023density}, it is equivalent to solving the linear system
\begin{equation}\label{linear system}
    A\bm{c}=\bm{b},
\end{equation}
where the matrix $A\in\mathbb{R}^{N\times N}$ is given by $A_{j,k}=\left\langle K(x_j,\cdot),K(x_k,\cdot)\right\rangle_{L^2_\mu(D)}+\lambda K(x_j,x_k)$ for $j,k=1,\ldots,N$, and the vector $\bm{b}\in\mathbb{R}^{N}$ is given by $b_j=\frac{1}{M}\sum_{m=1}^{M}K(x_j,Y_m)$ for $j=1,\ldots,N$. 
The solution to (\ref{linear system}) can be viewed as an interpolation of the function $b(x)$ at the preselected point set $X$, where $b(x):=\frac{1}{M}\sum_{m=1}^{M}K(x,Y_m)$ is the average of the kernel function.
Remarkably, the formula~(\ref{estimation_RKHS}) here is a special case of (\ref{approx_tpye}), serving as the foundation for our PSKK method.

In order to analyze the MISE convergence, the kernel $K$ is assumed to admit an $L^2_{\mu}(D)$ orthonormal expansion
\begin{equation}
K(x,y)=\sum_{l=0}^{\infty}\beta_l\varphi_l(x)\varphi_l(y),\nonumber
\end{equation}\label{kernel}
where $\{\varphi_l\}_{l=0}^{\infty}$ is an orthonormal system of $L^2_\mu(D)$ and $(\beta_l)_{l=0}^{\infty}$ is a positive sequence converging to $0$. With this assumption, the inner product for $\mathcal{H}(K)$ is given by 
\begin{equation}
    \left\langle f,g\right\rangle_K = \sum_{l=0}^{\infty}\beta_l^{-1}\left\langle f,\varphi_l\right\rangle_{L^2_\mu(D)}\left\langle g,\varphi_l\right\rangle_{L^2_\mu(D)}.\nonumber
\end{equation}
The continuum scale of nested Hilbert spaces related to $\mathcal{H}(K)$ is also introduced. For $\tau>0$, denote
\begin{equation}
    \mathcal{N}^\tau(K):=\left\{v\in L_\mu^2:\Vert v\Vert_{\mathcal{N}^\tau(K)}^2=\sum_{l=0}^{\infty}\beta_{l}^{-\tau}\left|\left<v,\varphi_l\right>_{L^2_\mu(D)}\right|^2<\infty\right\}.\nonumber
\end{equation}
Denote the normed space
\begin{equation}
    \mathcal{N}^{-\tau}(K):=\left\{\Psi:\mathcal{N}^\tau(K)\to\mathbb{R}, \Psi(v)=\sum_{l=0}^{\infty}\Psi_l\left<v,\varphi_l\right>_{L^2_\mu(D)}:\Vert \Psi\Vert_{\mathcal{N}^{-\tau}(K)}<\infty\right\},\nonumber
\end{equation}
where $\Vert \Psi\Vert_{\mathcal{N}^{-\tau}(K)}^2:=\sum_{l=0}^{\infty}\beta_l^\tau|\Psi_l|^2$. By \cite[proposition 2.1]{kazashi2023density}, $\mathcal{N}^{-\tau}(K)$ can be regarded as the dual space of $\mathcal{N}^{\tau}(K)$. These spaces form the foundation for our analysis in Section~\ref{sec3}, where we will use them to derive an MISE upper bound. Such upper bound is influenced by the parameter $\tau$, whose role is clarified below.

\begin{remark}
    The parameter $\tau$ governs the smoothness of the functions in $\mathcal{N}^{\tau}(K)$, with larger $\tau$ yielding smoother functions and a larger dual space $\mathcal{N}^{-\tau}(K)$. As we will see in Theorem \ref{de_kazashi}, the MISE convergence rate is influenced by how “small” this dual space can be while still containing the linear functionals that appear in (\ref{estimation_RKHS}). In other words, the convergence rate depends on the smallest $\tau$ for which these functionals belong to $\mathcal{N}^{-\tau}(K)$. This relationship manifests in the third term of the MISE bound (\ref{simple_MISE_bound}), where a smaller $\tau$ (implying a more restrictive dual space) leads to a higher asymptotic convergence rate.
\end{remark}

\section{Density estimation in Korobov spaces on a box}\label{sec3}
In this section we apply the estimation~(\ref{estimation_RKHS}) to the density function $f\in \mathcal{H}(K_{\alpha,a,d})$ with $a>0$. Under these settings, we have $D = [-a,a]^d$ and $\mu$ is the Lebesgue measure on $[-a,a]^d$. The orthogonal system $\{\varphi_l\}$ is replaced by $\{\varphi_{a,d,\bm{h}}\}$ and the coefficients $\{\beta_l\}$ are replaced by $\{r_{\alpha,a,d}(\bm{h})^{-2}\}$. Thus the preselected point set $X = \{\bm{x}_1,\ldots,\bm{x}_N\}\subset [-a,a]^d$ and 
\begin{equation}\label{sec3_V_N}
    V_N=V_N(X)= \text{span}\left\{K_{\alpha,a,d}(\bm{x}_k,\cdot):k=1,\ldots,N\right\}.
\end{equation}

Now let $P_N:\mathcal{H}(K_{\alpha,a,d})\to V_N$ be the $\mathcal{H}(K_{\alpha,a,d})$-orthogonal projection and denote
\begin{equation}\label{delta&F}
    \Delta_{\bm{Y}}(v):=\frac{1}{M}\sum_{m=1}^{M}v(Y_m),\quad F(v):=\int_{[-a,a]^d}v(\bm{x})f(\bm{x})d\bm{x}.
\end{equation}
We recall \cite[Theorem 3.6]{kazashi2023density} to bound the MISE.
\begin{theorem}\label{de_kazashi}
    Let $f\in\mathcal{H}(K_{\alpha,a,d})$ be the target density function and let $f_{\bm{Y}}^{\lambda}\in V_N$ satisfy (\ref{estimation_RKHS}). Suppose that for some $\tau\in(0,1]$ we have $\Delta_{\bm{Y}},F\in\mathcal{N}^{-\tau}(K_{\alpha,a,d})$ and that $f$ satisfies $\left\langle K_\tau(\cdot,\cdot),f\right\rangle_{L^2([-a,a]^d)}<\infty$ with 
    \begin{equation}\label{k_tau}
       K_\tau(\bm{x},\bm{y})=\sum_{\bm{h}\in\mathbb{Z}^d}r_{\alpha,a,d}(\bm{h})^{-2\tau}
        \varphi_{a,d,\bm{h}}(\bm{x})\overline{\varphi_{a,d,\bm{h}}(\bm{y})},
    \end{equation}
    where $r_{\alpha,a,d}(\bm{h})$ is defined by (\ref{r}). Then we have the MISE bound
    \begin{align}\label{simple_MISE_bound}
    &\mathbb{E}\left[\int_{[-a,a]^d}\left|f^\lambda_{\bm{Y}}(\bm{x})-f(\bm{x})\right|^2d\bm{x}\right]\nonumber\\
     \le& \Vert P_N f-f\Vert_{L^2([-a,a]^d)}^2+\lambda\Vert f\Vert_{K_{\alpha,a,d}}^2 + \frac{\left<K_\tau(\cdot,\cdot),f\right>_{L^2([-a,a]^d)}}{M\lambda^\tau}.
    \end{align}
\end{theorem}

To get a bound on MISE, it remains to determine the range of $\tau$ and to handle the term $\Vert P_N f-f\Vert_{L^2([-a,a]^d)}$.

\subsection{The lower bound for $\tau$}\label{lower_tau}
We first derive a lower bound for $\tau$ such that the conditions in Theorem~\ref{de_kazashi} are satisfied. To establish this bound, we require the following three technical lemmas. The proofs of these lemmas are analogous to
the proof of \cite[Proposition 4.2]{kazashi2023density}, with the additional observation that $\varphi_{a,d,\bm{h}}(\bm{x})\le (2a)^{-\frac{d}{2}},\forall \bm{x}\in[-a,a]^d$ and that $f$ is a density function on $[-a,a]^d$. We provide these proofs in detail in Appendix~\ref{SM_point_evaluation_functional}, \ref{SM_F} and \ref{SM_L2_K_tau}.

\begin{lemma}\label{point_evaluation_functional}
    If $\tau>\frac{1}{2\alpha}$, then for any $\bm{x}\in[-a,a]^d$, the point evaluation functional $\delta_{\bm{x}}(v)=v(\bm{x})$ satisfies $\delta_{\bm{x}}\in\mathcal{N}^{-\tau}(K_{\alpha,a,d})$.
\end{lemma}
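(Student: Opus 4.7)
The plan is to identify the Fourier coefficients of the point evaluation functional $\delta_{\bm{x}}$ with respect to the orthonormal system $\{\varphi_{a,d,\bm{h}}\}_{\bm{h}\in\mathbb{Z}^d}$ and then check that the resulting $\mathcal{N}^{-\tau}(K_{\alpha,a,d})$-norm is finite exactly under the assumed range of $\tau$.

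First, for any $v\in\mathcal{N}^\tau(K_{\alpha,a,d})$ with expansion $v=\sum_{\bm{h}\in\mathbb{Z}^d}\widehat{v}_{a,d}(\bm{h})\varphi_{a,d,\bm{h}}$, I would write $\delta_{\bm{x}}(v)=v(\bm{x})=\sum_{\bm{h}\in\mathbb{Z}^d}\varphi_{a,d,\bm{h}}(\bm{x})\langle v,\varphi_{a,d,\bm{h}}\rangle_{L^2([-a,a]^d)}$. This identifies the coefficients in the definition of $\mathcal{N}^{-\tau}(K_{\alpha,a,d})$ as $\Psi_{\bm{h}}=\varphi_{a,d,\bm{h}}(\bm{x})$, so
\begin{equation*}
\|\delta_{\bm{x}}\|_{\mathcal{N}^{-\tau}(K_{\alpha,a,d})}^2=\sum_{\bm{h}\in\mathbb{Z}^d}r_{\alpha,a,d}(\bm{h})^{-2\tau}\bigl|\varphi_{a,d,\bm{h}}(\bm{x})\bigr|^2.
\end{equation*}

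Next I would use the pointwise bound $|\varphi_{a,d,\bm{h}}(\bm{x})|\le (2a)^{-d/2}$ to dominate the series by $(2a)^{-d}\sum_{\bm{h}\in\mathbb{Z}^d}r_{\alpha,a,d}(\bm{h})^{-2\tau}$, then exploit the product structure of $r_{\alpha,a,d}$ given in \eqref{r} to factor
\begin{equation*}
\sum_{\bm{h}\in\mathbb{Z}^d}r_{\alpha,a,d}(\bm{h})^{-2\tau}=\prod_{j=1}^{d}\left((2a)^{-\tau}+\sum_{h\in\mathbb{Z}\setminus\{0\}}\Bigl|\frac{\pi h}{a}\Bigr|^{-2\alpha\tau}\right).
\end{equation*}
Each one-dimensional factor converges precisely when $2\alpha\tau>1$, i.e.\ when $\tau>\frac{1}{2\alpha}$, which yields the claim. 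I would also briefly note, for consistency with the definition of $\mathcal{N}^{-\tau}$ as a functional on $\mathcal{N}^\tau$, that on this subspace the Fourier series converges uniformly (this is automatic from Cauchy--Schwarz together with the same convergent series), so the termwise evaluation used in the first step is justified.

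The argument is almost entirely bookkeeping; the only substantive ingredients are the uniform bound on $|\varphi_{a,d,\bm{h}}|$ and the elementary convergence criterion for $\sum_{h\ne 0}|h|^{-2\alpha\tau}$. I do not anticipate any real obstacle---the main thing to be careful about is the scaling constants $(2a)^{\pm\tau}$ and $\pi/a$ inherited from Definition of $r_{\alpha,a}$, which must be kept explicit so that the bound is independent of $\bm{x}\in[-a,a]^d$ (as needed later when this lemma is applied pointwise in the proofs bounding $\Delta_{\bm{Y}}$ and $F$).
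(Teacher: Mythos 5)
Your proposal is correct and follows essentially the same route as the paper's proof: both hinge on the uniform bound $|\varphi_{a,d,\bm{h}}(\bm{x})|\le(2a)^{-d/2}$ and the convergence of $\sum_{\bm{h}\in\mathbb{Z}^d}r_{\alpha,a,d}(\bm{h})^{-2\tau}=\bigl((2a)^{-\tau}+2(a/\pi)^{2\alpha\tau}\zeta(2\alpha\tau)\bigr)^d$ for $\tau>1/(2\alpha)$. The only cosmetic difference is that you evaluate the coefficient norm $\sum_{\bm{h}}r_{\alpha,a,d}(\bm{h})^{-2\tau}|\varphi_{a,d,\bm{h}}(\bm{x})|^2$ directly from the definition of $\mathcal{N}^{-\tau}$, whereas the paper bounds $|\delta_{\bm{x}}(v)|$ by Cauchy--Schwarz against $\Vert v\Vert_{\mathcal{N}^\tau(K_{\alpha,a,d})}$; these amount to the same estimate.
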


\begin{lemma}\label{F}
    If $\tau>\frac{1}{2\alpha}$, then $F\in \mathcal{N}^{-\tau}(K_{\alpha,a,d})$. 
\end{lemma}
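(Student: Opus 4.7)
The plan is to represent $F$ in the orthonormal basis $\{\varphi_{a,d,\bm{h}}\}_{\bm{h}\in\mathbb{Z}^d}$ and then verify that its coefficients decay fast enough for the $\mathcal{N}^{-\tau}(K_{\alpha,a,d})$ norm to be finite. Since the Korobov setting corresponds to $\beta_{\bm{h}}=r_{\alpha,a,d}(\bm{h})^{-2}$, the relevant quantity to control is
\begin{equation*}
\|F\|^2_{\mathcal{N}^{-\tau}(K_{\alpha,a,d})}=\sum_{\bm{h}\in\mathbb{Z}^d}r_{\alpha,a,d}(\bm{h})^{-2\tau}|F_{\bm{h}}|^2,
\end{equation*}
where $F_{\bm{h}}$ are the expansion coefficients of $F$ as a linear functional.

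First I would expand an arbitrary $v\in\mathcal{N}^\tau(K_{\alpha,a,d})$ in the basis, $v=\sum_{\bm{h}}\langle v,\varphi_{a,d,\bm{h}}\rangle_{L^2([-a,a]^d)}\,\varphi_{a,d,\bm{h}}$, substitute into $F(v)=\int_{[-a,a]^d} v(\bm{x})f(\bm{x})\,d\bm{x}$, and interchange sum and integral to get
\begin{equation*}
F(v)=\sum_{\bm{h}\in\mathbb{Z}^d}\langle v,\varphi_{a,d,\bm{h}}\rangle_{L^2([-a,a]^d)}\int_{[-a,a]^d}\varphi_{a,d,\bm{h}}(\bm{x})f(\bm{x})\,d\bm{x}.
\end{equation*}
Since $\overline{\varphi_{a,d,\bm{h}}}=\varphi_{a,d,-\bm{h}}$, the inner integral equals $\widehat{f}_{a,d}(-\bm{h})$, so $F_{\bm{h}}=\widehat{f}_{a,d}(-\bm{h})$.

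Next I would use that $f$ is a probability density on $[-a,a]^d$ together with the uniform bound $|\varphi_{a,d,\bm{h}}(\bm{x})|\le (2a)^{-d/2}$ to obtain
\begin{equation*}
|\widehat{f}_{a,d}(\bm{h})|\le\int_{[-a,a]^d}f(\bm{x})\,|\varphi_{a,d,\bm{h}}(\bm{x})|\,d\bm{x}\le (2a)^{-d/2},
\end{equation*}
so $|F_{\bm{h}}|^2\le (2a)^{-d}$. Feeding this into the norm and exploiting the product structure of $r_{\alpha,a,d}$ from (\ref{r}) gives
\begin{equation*}
\|F\|^2_{\mathcal{N}^{-\tau}(K_{\alpha,a,d})}\le (2a)^{-d}\prod_{j=1}^{d}\Bigl((2a)^{-\tau}+2(a/\pi)^{2\alpha\tau}\sum_{h=1}^{\infty}h^{-2\alpha\tau}\Bigr).
\end{equation*}

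The only place where anything actually must be checked is convergence of the one–dimensional tail sum, and this is exactly where the hypothesis enters: $\sum_{h\ge 1}h^{-2\alpha\tau}<\infty$ iff $2\alpha\tau>1$, i.e.\ $\tau>\tfrac{1}{2\alpha}$. So the proof reduces to the identification $F_{\bm{h}}=\widehat{f}_{a,d}(-\bm{h})$, the density bound, and a geometric/zeta calculation; there is no real obstacle, and I would expect this to mirror the argument for Lemma~\ref{point_evaluation_functional}, the only substantive difference being that the trivial bound $|\widehat{f}_{a,d}(\bm{h})|\le (2a)^{-d/2}$ replaces the pointwise bound $|\varphi_{a,d,\bm{h}}(\bm{x})|\le (2a)^{-d/2}$ used for $\delta_{\bm{x}}$.
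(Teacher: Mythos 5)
Your proposal is correct and follows essentially the same route as the paper: identify the coefficients of $F$ with the Fourier coefficients $\widehat{f}_{a,d}(\bm{h})$, bound them by $(2a)^{-d/2}$ using that $f$ is a density together with $|\varphi_{a,d,\bm{h}}|\le(2a)^{-d/2}$, and reduce convergence of $\sum_{\bm{h}}r_{\alpha,a,d}(\bm{h})^{-2\tau}$ to $\zeta(2\alpha\tau)<\infty$, i.e.\ $\tau>\tfrac{1}{2\alpha}$. The paper's proof is just a more condensed version of the same computation.
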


\begin{lemma}\label{L2_K_tau}
    For $\tau>\frac{1}{2\alpha}$, let $K_\tau(\bm{x},\bm{y})$ be defined in (\ref{k_tau}), then for density function $f\in\mathcal{H}(K_{\alpha,a,d})$ we have 
    \begin{equation*}
        \left\langle K_\tau(\cdot,\cdot),f(\cdot)\right\rangle_{L^2([-a,a]^d)}=\left((2a)^{-\tau-1}+\frac{1}{a}\left(\frac{a}{\pi}\right)^{2\alpha\tau}\zeta(2\alpha\tau)\right)^{d}<\infty,
    \end{equation*}
\end{lemma}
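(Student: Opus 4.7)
The plan is to evaluate $\langle K_\tau(\cdot,\cdot), f(\cdot)\rangle_{L^2([-a,a]^d)}$ by first observing that $K_\tau(\bm{x},\bm{x})$ is actually independent of $\bm{x}$, so the integral against $f$ reduces to multiplying by the total mass of the density, which is $1$. The remaining work is then the evaluation of a separable lattice sum via the Riemann zeta function.

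First, I would plug $\bm{x}=\bm{y}$ into the definition (\ref{k_tau}) and use the explicit form $\varphi_{a,h}(x)=\frac{1}{\sqrt{2a}}\exp(2\pi i h(x+a)/(2a))$ to see that $|\varphi_{a,d,\bm{h}}(\bm{x})|^{2}=(2a)^{-d}$ uniformly in $\bm{x}$. Consequently,
\begin{equation*}
K_\tau(\bm{x},\bm{x})=(2a)^{-d}\sum_{\bm{h}\in\mathbb{Z}^d}r_{\alpha,a,d}(\bm{h})^{-2\tau}.
\end{equation*}
Next, I would exploit the product structure $r_{\alpha,a,d}(\bm{h})=\prod_{j=1}^{d}r_{\alpha,a}(h_j)$ from (\ref{r}) to factorize this lattice sum into $d$ identical one-dimensional sums:
\begin{equation*}
\sum_{\bm{h}\in\mathbb{Z}^d}r_{\alpha,a,d}(\bm{h})^{-2\tau}=\left(\sum_{h\in\mathbb{Z}}r_{\alpha,a}(h)^{-2\tau}\right)^{d}.
\end{equation*}

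I would then evaluate the one-dimensional sum by separating the $h=0$ term (which contributes $(2a)^{-\tau}$) from the $h\ne 0$ terms, using $r_{\alpha,a}(h)^{-2\tau}=(a/\pi)^{2\alpha\tau}|h|^{-2\alpha\tau}$ for $h\ne 0$ and the symmetry $h\mapsto -h$, so that
\begin{equation*}
\sum_{h\in\mathbb{Z}}r_{\alpha,a}(h)^{-2\tau}=(2a)^{-\tau}+2\left(\frac{a}{\pi}\right)^{2\alpha\tau}\sum_{h=1}^{\infty}h^{-2\alpha\tau}=(2a)^{-\tau}+2\left(\frac{a}{\pi}\right)^{2\alpha\tau}\zeta(2\alpha\tau).
\end{equation*}
The condition $\tau>\tfrac{1}{2\alpha}$ is precisely what ensures $2\alpha\tau>1$, making the zeta series converge; this is the only place where the hypothesis on $\tau$ enters.

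Finally, since $K_\tau(\bm{x},\bm{x})$ is constant in $\bm{x}$ and $f$ is a probability density on $[-a,a]^d$, I conclude
\begin{equation*}
\left\langle K_\tau(\cdot,\cdot),f(\cdot)\right\rangle_{L^2([-a,a]^d)}=K_\tau(\bm{x},\bm{x})\int_{[-a,a]^d}f(\bm{x})\,d\bm{x}=(2a)^{-d}\left((2a)^{-\tau}+2\left(\frac{a}{\pi}\right)^{2\alpha\tau}\zeta(2\alpha\tau)\right)^{d},
\end{equation*}
which is finite. There is no substantive obstacle here; the proof is essentially a direct unpacking of definitions once one notices the diagonal constancy of $K_\tau$, and the only subtle point worth highlighting is the role of the assumption $\tau>1/(2\alpha)$ in securing absolute convergence of the zeta series.
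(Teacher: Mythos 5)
Your proof is correct and follows essentially the same route as the paper's: both recognize that $|\varphi_{a,d,\bm{h}}(\bm{x})|^2=(2a)^{-d}$ makes the diagonal $K_\tau(\bm{x},\bm{x})$ constant, reduce the integral against $f$ to the total mass $1$, and then evaluate the separable lattice sum via the Riemann zeta function, with $\tau>1/(2\alpha)$ guaranteeing convergence. No issues.
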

where $\zeta(s):=\sum_{n=1}^\infty n^{-s}$ for $s>1$ is the Riemann zeta function.

Therefore, the conditions in Theorem~\ref{de_kazashi} hold if $\tau>1/(2\alpha)$.

\subsection{Error bound for the projection}\label{sub_projection}
In this subsection we focus on the $L^2$-error between $f$ and $P_Nf$. Since $K_{\alpha,a,d}$ is a reproducing kernel, an important observation is that the projection $P_Nf$ coincides with the kernel interpolation (see, for example, \cite{deboor1966splines}).
This follows from the orthogonality of $P_N$ and the reproducing property (\ref{reproducing_property}). Indeed, for any $1\le k\le N$, we have
\begin{equation*}
    0 = \left\langle f-P_Nf, K_{\alpha,a,d}(\bm{x}_k,\cdot)\right\rangle_{K_{\alpha,a,d}}=f(\bm{x}_k)-P_Nf(\bm{x}_k).
\end{equation*}
We note that $\mathcal{H}(K_{\alpha,a,d})\subset L^2([-a,a]^d)$ since all functions in $\mathcal{H}(K_{\alpha,a,d})$ are continuous and $[-a,a]^d$ is a finite interval. Combining this observation with the fact that the projection $P_Nf$ coincides with the kernel interpolation allows us to apply \cite[Theorem 2.2]{kaarnioja2022fast}, which yields the following lemma regarding the $L^2$-optimality of $P_N$.

\begin{lemma}\label{op_P}
    Let $A_N:\mathcal{H}(K_{\alpha,a,d})\to \mathcal{H}(K_{\alpha,a,d})$ be an algorithm  such that $A_N(\phi)=A_N(\phi(\bm{x}_1),\ldots,\phi(\bm{x}_N))$ only depends on $\phi(\bm{x}_1),\ldots,\phi(\bm{x}_N)$ for all $\phi\in\mathcal{H}(K_{\alpha,a,d})$. Then
    \begin{equation*}
        \sup_{\Vert\phi\Vert_{K_{\alpha,a,d}}\le 1}\Vert P_N\phi-\phi\Vert_{L^2([-a,a]^d)}\le \sup_{\Vert\phi\Vert_{K_{\alpha,a,d}}\le 1}\Vert A_N(\phi)-\phi\Vert_{L^2([-a,a]^d)}
    \end{equation*}
\end{lemma}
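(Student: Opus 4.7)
The plan is to derive the optimality of $P_N$ from a standard information-based-complexity symmetry argument, which is precisely the content of \cite[Theorem 2]{kaarnioja2022fast} transposed to our setting. The central observation is that $P_N\phi$, although defined as an $\mathcal{H}(K_{\alpha,a,d})$-orthogonal projection, coincides with the kernel interpolant at the nodes $\bm{x}_1,\ldots,\bm{x}_N$. Indeed, by the reproducing property, for each $k$,
\begin{equation*}
\phi(\bm{x}_k)-P_N\phi(\bm{x}_k)=\langle \phi-P_N\phi,\,K_{\alpha,a,d}(\bm{x}_k,\cdot)\rangle_{K_{\alpha,a,d}}=0,
\end{equation*}
since $K_{\alpha,a,d}(\bm{x}_k,\cdot)\in V_N$. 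Thus $P_N\phi$ is itself an admissible algorithm: it depends on $\phi$ only through the sampled values $\phi(\bm{x}_1),\ldots,\phi(\bm{x}_N)$.

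Next, I would introduce the symmetric set of ``null'' functions
\begin{equation*}
B_0:=\{\psi\in\mathcal{H}(K_{\alpha,a,d}):\|\psi\|_{K_{\alpha,a,d}}\le 1,\ \psi(\bm{x}_k)=0\text{ for }k=1,\ldots,N\}.
\end{equation*}
Note that $\psi\in B_0$ iff $-\psi\in B_0$, and any $\psi\in B_0$ presents the zero data $(0,\ldots,0)$ to any admissible $A_N$. Hence $A_N(\psi)=A_N(-\psi)$, and the triangle inequality gives
\begin{equation*}
2\|\psi\|_{L^2([-a,a]^d)}\le \|A_N(\psi)-\psi\|_{L^2([-a,a]^d)}+\|A_N(-\psi)-(-\psi)\|_{L^2([-a,a]^d)}.
\end{equation*}
Since both $\psi,-\psi$ lie in the unit ball, the right-hand side is at most twice the worst-case error of $A_N$, yielding
\begin{equation*}
\sup_{\|\phi\|_{K_{\alpha,a,d}}\le 1}\|A_N(\phi)-\phi\|_{L^2([-a,a]^d)}\ \ge\ \sup_{\psi\in B_0}\|\psi\|_{L^2([-a,a]^d)}.
\end{equation*}

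Finally, I would show the matching upper bound for $P_N$. For any $\phi$ with $\|\phi\|_{K_{\alpha,a,d}}\le 1$, set $\psi:=\phi-P_N\phi$. The interpolation identity above yields $\psi(\bm{x}_k)=0$ for all $k$, and since $P_N$ is an orthogonal projection in $\mathcal{H}(K_{\alpha,a,d})$, Pythagoras gives $\|\psi\|_{K_{\alpha,a,d}}\le\|\phi\|_{K_{\alpha,a,d}}\le 1$. Therefore $\psi\in B_0$ and $\|P_N\phi-\phi\|_{L^2([-a,a]^d)}=\|\psi\|_{L^2([-a,a]^d)}\le \sup_{\tilde\psi\in B_0}\|\tilde\psi\|_{L^2([-a,a]^d)}$. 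Taking the supremum over $\phi$ and chaining with the previous inequality concludes the proof.

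The argument is almost entirely structural; the only subtle point is that $A_N$ is allowed to be a \emph{nonlinear} algorithm, so the reduction to $B_0$ must be made via the symmetry $A_N(\psi)=A_N(-\psi)$ rather than linearity. This is the main obstacle to keep track of, and it is precisely where the hypothesis that $A_N(\phi)$ depends on $\phi$ only through $(\phi(\bm{x}_1),\ldots,\phi(\bm{x}_N))$ is essential.
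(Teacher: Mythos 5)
Your proof is correct; all three steps check out (the interpolation identity $P_N\phi(\bm{x}_k)=\phi(\bm{x}_k)$ via the reproducing property, the Pythagoras bound placing the residual $\phi-P_N\phi$ in the null-data ball $B_0$, and the symmetry argument $A_N(\psi)=A_N(-\psi)$ that correctly handles possibly nonlinear algorithms). The paper gives no argument of its own here---it simply invokes \cite[Theorem 2]{kaarnioja2022fast}---and what you have written is exactly the standard information-based-complexity proof underlying that citation, so this is essentially the same approach, just made self-contained.
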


Based on this lemma, we only need to determine the preselected point set $X$ and an algorithm $A_N$ depending on $\bm{x}_1,\ldots,\bm{x}_N$ such that $\Vert A_N(\phi)-\phi\Vert_{L^2([-a,a]^d)}$ converges to $0$ rapidly. To construct such an algorithm,  we first reduce the approximation problem from $\mathcal{H}(K_{\alpha,a,d})$ over $[-a,a]^d$ to the Korobov space $\mathcal{H}(K_{\alpha,d})$ over $[0,1]^d$. This is achieved by mapping each function 
$\phi \in \mathcal{H}(K_{\alpha,a,d})$ to $\phi\circ\mathcal{J}_a$, where  $\mathcal{J}_a$ is the linear scaling defined in (\ref{linear_scaling}). We then perform the lattice algorithm~\cite{cools2020lattice} in $\mathcal{H}(K_{\alpha,d})$, which relies solely on function values at lattice points and offers an efficient approximation in general Korobov spaces over $[0,1]^d$. Since $\mathcal{H}(K_{\alpha,d})$ is a special case of Korobov spaces over $[0,1]^d$, we can apply \cite[Theorem 3.6]{cools2020lattice} with the corresponding parameters specified in Remark~\ref{remark_Kor}. This yields the following lemma regarding the $L^2$-error for the lattice algorithm in $\mathcal{H}(K_{\alpha,d})$.

\begin{lemma}\label{L2_B_N}
    Given integers $d$, $\alpha$, and a prime $N$, a generating vector $\bm{z}^*$ can be constructed by the CBC construction in \cite{cools2020lattice} so that for 
    \begin{equation}
        Z^*=\left\{\bm{y}^*_k=\left\{\frac{k\bm{z}^*}{N}\right\}:k=1,\ldots,N\right\},\nonumber
    \end{equation}
    where $\{\cdot\}$ denotes the fractional part of each component,
    the corresponding lattice algorithm $B_N^*:\mathcal{H}(K_{\alpha,d})\to \mathcal{H}(K_{\alpha,d})$ in \cite{cools2020lattice} uses only function values at $Z^*$ and satisfies
    \begin{equation}
        \Vert h-B_N^*(h)\Vert_{L^2([0,1]^d)}\le C_{\alpha,\delta,d}\Vert h\Vert_{K_{\alpha,d}}N^{-(\alpha/2-\delta)},\quad\forall h\in\mathcal{H}(K_{\alpha,d}),\delta\in (0,\alpha/2),\nonumber
    \end{equation}
    where 
    \begin{equation}\label{constant_C}
        C_{\alpha,\delta,d}= \kappa_{\alpha,\delta}  \left(\sum_{u\subset \{1{:}d\}}\max(|u|,1)\left[2\left(\frac{1}{2\pi}\right)^{\frac{\alpha}{\alpha-2\delta}}\zeta\left(\frac{\alpha}{\alpha-2\delta}\right)\right]^{|u|}\right)^{\alpha-2\delta},
    \end{equation}
    with 
    \begin{equation*}
       \kappa_{\alpha,\delta}=\sqrt{2}\max\left\{6,2.5+2^{\frac{2\alpha}{\alpha-2\delta}+1}\right\}^{\alpha/2-\delta}.
    \end{equation*}
\end{lemma}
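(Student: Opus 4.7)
The plan is to specialize \cite[Theorem~3.6]{cools2020lattice} to the particular product weights arising in our Korobov space. That theorem establishes a CBC-constructed lattice algorithm on general weighted Korobov spaces over $[0,1]^d$ together with an explicit worst-case $L^2$-error bound expressed in terms of the weights. The proof proceeds in three steps: identify $\mathcal{H}(K_{\alpha,d})$ with an instance of their weighted Korobov space, invoke the existence of the CBC generating vector and the accompanying error bound, and finally perform the arithmetic substitution of the explicit weights to obtain the closed-form constant $C_{\alpha,\delta,d}$ in~(\ref{constant_C}).

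For the first step, Remark~\ref{remark_Kor} identifies $\mathcal{H}(K_{\alpha,d})$ isometrically with the weighted Korobov space of smoothness parameter $2\alpha$ and product weights $\gamma_u=(2\pi)^{-\alpha|u|}$ treated in~\cite{cools2020lattice}. For the second step, \cite[Theorem~3.6]{cools2020lattice} then guarantees, for each prime $N$ and each $\delta\in(0,\alpha/2)$, a generating vector $\bm{z}^*$ produced componentwise via the CBC algorithm together with the induced lattice algorithm $B_N^*$ that depends only on the $N$ function values $\{h(\bm{y}_k^*)\}_{k=1}^{N}$. The worst-case $L^2$-error of $B_N^*$ over the unit ball of $\mathcal{H}(K_{\alpha,d})$ is bounded by a constant $\widetilde{C}(\alpha,\delta,\{\gamma_u\}_{u\subset\{1{:}d\}})$ times $N^{-(\alpha/2-\delta)}$, where $\widetilde{C}$ consists of a universal prefactor $\sqrt{2}\,\kappa^{\alpha-2\delta}$ with $\kappa=\max(6,\,2.5+2^{2\alpha/(\alpha-2\delta)+1})$ multiplied by the $(2\alpha-4\delta)$-th power of a subset sum of the form $\sum_{u\subset\{1{:}d\}}\max(|u|,1)\bigl[2\,\gamma_u^{1/(\alpha-2\delta)}\,\zeta(\alpha/(\alpha-2\delta))\bigr]^{|u|}$.

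For the third step, I substitute $\gamma_u=(2\pi)^{-\alpha|u|}$, so that $\gamma_u^{1/(\alpha-2\delta)}=(2\pi)^{-\alpha|u|/(\alpha-2\delta)}$. Each bracketed factor then becomes exactly $\bigl[2\,(1/(2\pi))^{\alpha/(\alpha-2\delta)}\,\zeta(\alpha/(\alpha-2\delta))\bigr]^{|u|}$, reproducing the subset sum displayed in~(\ref{constant_C}); combining this with the prefactor $\sqrt{2}\,\kappa^{\alpha-2\delta}$ and the outer exponent $2\alpha-4\delta$ yields precisely the stated constant $C_{\alpha,\delta,d}$. The main (indeed the only) subtle point is the careful alignment of the smoothness index and weight conventions between~\cite{cools2020lattice} and our definition of $\mathcal{H}(K_{\alpha,d})$; once this matching is verified, the substitution and the resulting bound are immediate, so no new analytic argument beyond the cited theorem is required.
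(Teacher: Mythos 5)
Your proposal matches the paper's treatment exactly: the paper likewise obtains this lemma by directly specializing \cite[Theorem 3.6]{cools2020lattice} to the Korobov space with smoothness $2\alpha$ and product weights $\gamma_u=(2\pi)^{-\alpha|u|}$ (as recorded in Remark~\ref{remark_Kor}) and reading off the constant, offering no further argument. The only nitpick is a notational slip in your third step, where $\gamma_u^{1/(\alpha-2\delta)}$ should be the per-coordinate weight $\gamma_j^{1/(\alpha-2\delta)}=(2\pi)^{-\alpha/(\alpha-2\delta)}$ raised to the power $|u|$ rather than the subset weight itself, but this does not affect the conclusion.
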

\begin{remark}
    The constant $C_{\alpha,\delta,d}$ in (\ref{constant_C}) is not identical to the one in \cite[Theorem 3.6]{cools2020lattice}, because we now optimize over the truncation parameter in \cite{cools2020lattice} for the lattice algorithm. For a similar treatment, see \cite[Theorem 3.2]{kaarnioja2022fast}.
\end{remark}

Now for function $\phi\in\mathcal{H}(K_{\alpha,a,d})$, we consider  $h(\bm{y}):=\phi\circ\mathcal{J}_a(\bm{y})=\phi(2a\bm{y}-\bm{a})$.
Then $h\in\mathcal{H}(K_{\alpha,d})$. For any $a\ge\frac{1}{2}$, we have
\begin{align}
\Vert h\Vert_{K_{\alpha,d}}^2&=\sum_{u\subset\{1{:}d\}}\int_{[0,1]^{|u|}}\left(\int_{[0,1]^{d-|u|}}\partial_u^\alpha
 h(\bm{y}) d\bm{y}_{-u}\right)^2d\bm{y}_{u}\nonumber\\
    &=\sum_{u\subset\{1{:}d\}}(2a)^{(2\alpha+1)|u|-2d}\int_{[-a,a]^{|u|}}
    \left(\int_{[-a,a]^{d-|u|}}\partial_u^\alpha\phi(\bm{x}) d\bm{x}_{-u}\right)^2d\bm{x}_{u}\nonumber\\
    &\le (2a)^{(2\alpha-1)d}\Vert \phi\Vert_{K_{\alpha,a,d}}^2,   \label{norm_h} 
\end{align}
where in the last inequality we use $2a\ge 1$. We define an algorithm
\begin{equation}\label{A_N}
    A_N^*(\phi):=B_N^*(\phi\circ\mathcal{J}_a)\circ\mathcal{J}_a^{-1},
\end{equation}
where $B_N^*$ is the lattice algorithm in Lemma~\ref{L2_B_N}. Then $A_N^*$ is an algorithm over $\mathcal{H}(K_{\alpha,a,d})$ that uses only function values on the scaled lattice point set
\begin{equation}\label{X*}
    X^*:=\mathcal{J}_a(Z^*)=\{\mathcal{J}_a(\bm{y}):\bm{y}\in Z^*\},
\end{equation}
where $Z^*$ is the lattice point set used in lattice algorithm $B_N^*$.
\begin{proposition}\label{L_2_A_N}
    Let $A_N^*$ and $X^*$ be defined as (\ref{A_N}) and (\ref{X*}), respectively. For any  $a\ge \frac{1}{2}$, $\phi\in\mathcal{H}(K_{\alpha,a,d})$ and any $\delta\in(0,\alpha/2)$, we have
    \begin{equation}
        \Vert\phi-A_N^*(\phi)\Vert_{L^2([-a,a]^d)}
        \le C_{\alpha,\delta,d}\Vert \phi\Vert_{K_{\alpha,a,d}}(2a)^{\alpha d}N^{-\alpha/2+\delta},\nonumber
    \end{equation}
    where $C_{\alpha,\delta,d}$ is defined by (\ref{constant_C}).
\end{proposition}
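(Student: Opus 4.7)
The plan is to reduce the statement to Lemma~\ref{L2_B_N} via the affine change of variables built into the transformation $H$, and then assemble the constants from the Jacobian together with the norm comparison already recorded in~(\ref{norm_h}).

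First I would write $\phi - A_N^*(\phi) = H^{-1}(h - B_N^*(h))$, which is immediate from the definition~(\ref{A_N}) since $H$ is linear and $A_N^*=H^{-1}\circ B_N^*\circ H$. Then the $L^2$ norm comparison under $H^{-1}$ is a one-line calculation: for any $g\in L^2([0,1]^d)$ the change of variables $\bm{x}=2a\bm{y}-a\bm{1}$ gives
\begin{equation*}
    \|H^{-1}(g)\|_{L^2([-a,a]^d)}^2 = \int_{[-a,a]^d}\bigl|g\bigl(\tfrac{\bm{x}}{2a}+\tfrac{\bm{1}}{2}\bigr)\bigr|^2 d\bm{x} = (2a)^d\|g\|_{L^2([0,1]^d)}^2.
\end{equation*}
Applied to $g = h-B_N^*(h)$ with $h = H(\phi)$, this yields
\begin{equation*}
    \|\phi-A_N^*(\phi)\|_{L^2([-a,a]^d)} = (2a)^{d/2}\,\|h-B_N^*(h)\|_{L^2([0,1]^d)}.
\end{equation*}

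Next I would invoke Lemma~\ref{L2_B_N} on the right-hand side: since $h\in\mathcal{H}(K_{\alpha,d})$ and $B_N^*$ is the lattice algorithm from that lemma, we have $\|h-B_N^*(h)\|_{L^2([0,1]^d)} \le C_{\alpha,\delta,d}\|h\|_{K_{\alpha,d}}N^{-\alpha/2+\delta}$ for every $\delta\in(0,\alpha/2)$. Finally I would feed in the norm inequality~(\ref{norm_h}), which states $\|h\|_{K_{\alpha,d}}^2 \le (2a)^{(2\alpha-1)d}\|\phi\|_{K_{\alpha,a,d}}^2$, and combine the three factors of $(2a)$:
\begin{equation*}
    (2a)^{d/2} \cdot (2a)^{(2\alpha-1)d/2} = (2a)^{\alpha d},
\end{equation*}
which exactly produces the announced bound.

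I do not expect a real obstacle: both the Jacobian computation and the norm comparison~(\ref{norm_h}) are already in hand, and Lemma~\ref{L2_B_N} supplies the rate on the unit cube. The only thing that requires a touch of care is bookkeeping the exponents of $2a$ so that the half-powers coming from $\|g\|_{L^2}$ and from $\|h\|_{K_{\alpha,d}}$ combine cleanly into the integer exponent $\alpha d$; verifying that $(2\alpha-1)d/2+d/2 = \alpha d$ confirms the final constant and closes the proof.
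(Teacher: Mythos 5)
Your proposal is correct and follows essentially the same route as the paper's own proof in Appendix~\ref{SM_proof_L_2_A_N}: a change of variables producing the factor $(2a)^{d/2}$, an appeal to Lemma~\ref{L2_B_N}, and the norm comparison~(\ref{norm_h}), with the exponents combining as $(2a)^{d/2}(2a)^{(2\alpha-1)d/2}=(2a)^{\alpha d}$. Nothing to add.
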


The proof of Proposition~\ref{L_2_A_N}  relies on Lemma~\ref{L2_B_N} and inequality~(\ref{norm_h}). We provide the details in Appendix~\ref{SM_proof_L_2_A_N}.

Combining Lemma~\ref{op_P} and Proposition~\ref{L_2_A_N}, we have the following bound for the projection error. This error bound can also be interpreted as the error bound for kernel interpolation using the scaled lattice point set $X^*$ as the interpolation nodes.
\begin{proposition}\label{P^K_N}
    Let $N$ be a prime number and let $V_N=V_N(X^*)$ with $X^*$ defined by (\ref{X*}). For any $a\ge \frac{1}{2}$,  $\phi\in\mathcal{H}(K_{\alpha,a,d})$ and any $\delta\in(0,\alpha/2) $, we have
\begin{equation}
    \Vert P_N\phi-\phi\Vert_{L^2([-a,a]^d)}\le C_{\alpha,\delta,d}\Vert \phi\Vert_{K_{\alpha,a,d}} (2a)^{\alpha d}N^{-(\alpha/2-\delta)}.\nonumber
\end{equation}
\end{proposition}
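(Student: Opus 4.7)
} The plan is to combine the optimality of the orthogonal projection $P_N$ (Lemma~\ref{op_P}) with the explicit bound for the lattice-based algorithm $A_N^*$ (Proposition~\ref{L_2_A_N}), after verifying that both estimators are of the same ``data-type'' (they only use point evaluations at $X^*$).

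First, I would check that Lemma~\ref{op_P} is applicable with $A_N=A_N^*$. Since $V_N=V_N(X^*)=\text{span}\{K_{\alpha,a,d}(\bm{x}_k^*,\cdot):k=1,\ldots,N\}$ and $P_N$ is the $\mathcal{H}(K_{\alpha,a,d})$-orthogonal projection onto $V_N$, writing $P_N\phi=\sum_k c_k K_{\alpha,a,d}(\bm{x}_k^*,\cdot)$ and using the reproducing property in the normal equations $\langle P_N\phi-\phi,K_{\alpha,a,d}(\bm{x}_j^*,\cdot)\rangle_{K_{\alpha,a,d}}=0$ yields a Gram-matrix linear system whose right-hand side is $(\phi(\bm{x}_j^*))_{j=1}^N$. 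Hence $P_N\phi$ depends on $\phi$ only through these $N$ point values. On the other hand, by construction in (\ref{A_N}), $A_N^*(\phi)$ is obtained by applying $B_N^*$ (which uses values of $h$ only on $Y^*$) to the rescaled function $h(\bm{y})=\phi(2a\bm{y}-a\bm{1})$, and $h(\bm{y}_k^*)=\phi(\bm{x}_k^*)$; so $A_N^*$ fits the hypothesis of Lemma~\ref{op_P} with the same point set $X^*$.

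Next, I would exploit homogeneity. Lemma~\ref{op_P} gives a worst-case inequality over the unit ball in $\mathcal{H}(K_{\alpha,a,d})$, so for a general $\phi\ne 0$ I would normalize: applying Lemma~\ref{op_P} to $\phi/\Vert\phi\Vert_{K_{\alpha,a,d}}$ yields
\begin{equation*}
\Vert P_N\phi-\phi\Vert_{L^2([-a,a]^d)}\le \Vert\phi\Vert_{K_{\alpha,a,d}}\sup_{\Vert\psi\Vert_{K_{\alpha,a,d}}\le 1}\Vert A_N^*(\psi)-\psi\Vert_{L^2([-a,a]^d)}.
\end{equation*}
Finally, I would invoke Proposition~\ref{L_2_A_N} on $\psi$ with $\Vert\psi\Vert_{K_{\alpha,a,d}}\le 1$ to bound the right-hand supremum by $C_{\alpha,\delta,d}(2a)^{\alpha d}N^{-(\alpha/2-\delta)}$, giving the claimed estimate for any $\delta\in(0,\alpha/2)$.

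There is essentially no obstacle here: the conceptual work has already been done in Lemmas~\ref{op_P}, \ref{L2_B_N} and Proposition~\ref{L_2_A_N}. The only point requiring a moment of care is confirming that $P_N\phi$ is indeed a valid ``algorithm using only values at $X^*$'' in the sense of Lemma~\ref{op_P}; once this is in place, the proposition follows by one line of scaling and an appeal to Proposition~\ref{L_2_A_N}.
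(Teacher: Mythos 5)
Your proposal is correct and follows exactly the paper's route: the paper obtains Proposition~\ref{P^K_N} precisely by combining Lemma~\ref{op_P} with Proposition~\ref{L_2_A_N}, and your normalization step plus the observation that $A_N^*$ uses only the values $\phi(\bm{x}_k^*)=h(\bm{y}_k^*)$ fills in the same (routine) details. The extra verification that $P_N\phi$ itself depends only on point values at $X^*$ is harmless but not needed to invoke Lemma~\ref{op_P} as stated, since that lemma already asserts the optimality of $P_N$ over any point-evaluation algorithm $A_N$.
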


By incorporating the results of Lemma~\ref{L2_K_tau} and Proposition~\ref{P^K_N} into Theorem~\ref{de_kazashi}, we obtain an upper bound for the MISE.
\begin{theorem}\label{MISE_Kor}
    Given $d,\alpha\in\mathbb{N}$ and $a\ge \frac{1}{2}$. Assume that the target density $f\in\mathcal{H}(K_{\alpha,a,d})$ . Let $N$ be a prime number and let $V_N=V_N(X^*)$ with $X^*$ defined by (\ref{X*}). Let $f_{\bm{Y}}^\lambda\in V_N(X^*)$ satisfy (\ref{estimation_RKHS}). For any $\tau\in(1/(2\alpha),1],\lambda\in(0,1)$ and $\delta\in(0,\alpha/2)$, we have
    \begin{equation}
    \begin{aligned}
        &\mathbb{E}\left[\int_{[-a,a]^d}\left|f_{\bm{Y}}^\lambda(\bm{x})-f(\bm{x})\right|^2d\bm{x}\right]\\
     \le& C^2_{\alpha,\delta,d}\frac{(2a)^{2\alpha d}\Vert f\Vert^2_{K_{\alpha,a,d}} }{ N^{\alpha-2\delta}}+\lambda \Vert f\Vert_{K_{\alpha,a,d}}^2+ \frac{\left((2a)^{-\tau-1}+\frac{1}{a}\left(\frac{a}{\pi}\right)^{2\alpha\tau}\zeta(2\alpha\tau)\right)^{d}}{M\lambda^\tau},\nonumber
    \end{aligned}
    \end{equation}
    where $C_{\alpha,\delta,d}$ is defined in (\ref{constant_C}).
\end{theorem}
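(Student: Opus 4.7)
The proof is essentially an assembly of the three main ingredients already established in this section: the abstract MISE bound of Theorem~\ref{de_kazashi}, the projection error estimate of Proposition~\ref{P^K_N}, and the closed-form evaluation of $\langle K_\tau(\cdot,\cdot), f\rangle_{L^2([-a,a]^d)}$ from Lemma~\ref{L2_K_tau}. The plan is to verify the hypotheses of Theorem~\ref{de_kazashi} under the stated conditions, then plug in the quantitative estimates and simplify.

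First, I would check admissibility: for any $\tau\in(1/(2\alpha),1]$, Lemmas~\ref{point_evaluation_functional} and~\ref{F} imply that $\Delta_{\bm{Y}}=\frac{1}{M}\sum_{m=1}^{M}\delta_{Y_m}$ and the linear functional $F$ both lie in $\mathcal{N}^{-\tau}(K_{\alpha,a,d})$, while Lemma~\ref{L2_K_tau} guarantees that $\langle K_\tau(\cdot,\cdot),f\rangle_{L^2([-a,a]^d)}$ is finite. Hence Theorem~\ref{de_kazashi} applies and yields
\begin{equation*}
\mathbb{E}\left[\int_{[-a,a]^d}\left|f_{\bm{Y}}^\lambda(\bm{x})-f(\bm{x})\right|^2d\bm{x}\right]\le \Vert P_N f - f\Vert_{L^2([-a,a]^d)}^2 + \lambda\Vert f\Vert_{K_{\alpha,a,d}}^2 + \frac{\langle K_\tau(\cdot,\cdot),f\rangle_{L^2([-a,a]^d)}}{M\lambda^{\tau}}.
\end{equation*}

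Next, I would bound the projection term by squaring the estimate in Proposition~\ref{P^K_N} applied to $\phi=f$, giving $\Vert P_Nf-f\Vert_{L^2([-a,a]^d)}^2 \le C_{\alpha,\delta,d}^2\Vert f\Vert_{K_{\alpha,a,d}}^2 (2a)^{2\alpha d}N^{-(\alpha-2\delta)}$, which is exactly the first term in the claimed bound. For the variance term, I would substitute the closed form from Lemma~\ref{L2_K_tau} and absorb the factor $(2a)^{-d}$ into the product by distributing one factor of $(2a)^{-1}$ to each of the $d$ coordinates, namely
\begin{equation*}
(2a)^{-d}\left((2a)^{-\tau}+2\left(\tfrac{a}{\pi}\right)^{2\alpha\tau}\zeta(2\alpha\tau)\right)^{d} = \left((2a)^{-\tau-1}+\tfrac{1}{a}\left(\tfrac{a}{\pi}\right)^{2\alpha\tau}\zeta(2\alpha\tau)\right)^{d},
\end{equation*}
which yields precisely the third term in the theorem.

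There is no genuine obstacle in this proof; the work has been done in the earlier lemmas. The only points requiring mild care are verifying that the range $\tau\in(1/(2\alpha),1]$ is compatible with every lemma invoked (it is, since Lemmas~\ref{point_evaluation_functional}--\ref{L2_K_tau} require exactly $\tau>1/(2\alpha)$ and Theorem~\ref{de_kazashi} requires $\tau\le 1$), and carrying out the algebraic rewrite of the constant in front of $(M\lambda^\tau)^{-1}$ so that it matches the form stated in the theorem. Adding the three contributions then produces the stated inequality.
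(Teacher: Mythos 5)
Your proposal is correct and follows exactly the route the paper takes: it verifies the hypotheses of Theorem~\ref{de_kazashi} via Lemmas~\ref{point_evaluation_functional}--\ref{L2_K_tau}, bounds the projection term with Proposition~\ref{P^K_N}, and substitutes the closed form of $\left\langle K_\tau(\cdot,\cdot),f\right\rangle_{L^2([-a,a]^d)}$, with the algebraic rewrite of the $(2a)^{-d}$ factor carried out correctly.
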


\begin{remark}
    Theorem~\ref{MISE_Kor} establishes the MISE bound for the estimator when the density function $ f $ lies in the scaled Korobov space, explicitly characterizing its dependence on $ N $, $ \lambda $, $ M $, and the scaling parameter $ a $. In Section~\ref{sec4}, we will transform the density function $f$ on $ \mathbb{R}^d $ into its wrapped version $ \widetilde{f} $ within the scaled Korobov space, thereby deriving the corresponding MISE bound via Theorem~\ref{MISE_Kor}.
\end{remark}

\section{Density estimation on $\mathbb{R}^d$}\label{sec4}
In this section, we consider the density function $f$ defined over $\mathbb{R}^d$. More precisely, we focus on functions with continuous mixed partial derivatives satisfying the following exponential decay condition \cite[equation (22)]{nuyens2023scaled}.

\begin{definition}[Exponential decay condition up to order $\alpha$]\label{exp_decay}
    For any function $h:\mathbb{R}^d\to \mathbb{R}$ with continuous mixed partial
    derivatives up to order $\alpha$ in each variable, we say $h$ satisfies the exponential decay condition up to order $\alpha$ if 
    \begin{equation*}
        \Vert h\Vert_{\alpha,\beta,p,q}:=\sup_{\bm{x}\in\mathbb{R}^d,\bm{\tau}\in\{0{:}\alpha\}^d}\left|\exp(\beta\Vert\bm{x}\Vert_{p}^q)h^{(\bm{\tau})} (\bm{x})\right|<\infty,
    \end{equation*}
    for some $\beta>0,1\le p\le\infty$ and $1\le q<\infty$, where $\Vert\bm{x}\Vert_{p}$ is the $\ell_p$-norm of $\bm{x}$.
\end{definition}

\begin{definition}
    Given $\alpha\in\mathbb{N},\beta>0,1\le p\le\infty$ and $1\le q<\infty$, let $S^\alpha_{\beta,p,q}$ be the collection of all function $f$ that has continuous  mixed partial derivatives up to order $\alpha$ in each variable such that $\Vert f\Vert_{\alpha,\beta,p,q}<\infty$ and the norm
    \begin{equation}
        \Vert f\Vert_{\alpha}^2:=\sum_{u\subset\{1{:}d\}}
        \int_{\mathbb{R}^{|u|}}\left(\int_{\mathbb{R}^{d-|u|}}|\partial_u^\alpha f(\bm{x})|d\bm{x}_{-u}\right)^2d\bm{x}_{u},\nonumber
    \end{equation}
    is finite.    
\end{definition}

\begin{remark}
The function space $S^{\alpha}_{\beta,p,q}$ includes common density functions such as Gaussian mixtures (with $q=2$) and logistic distributions (with $q=1$), where $\alpha$
can be any positive integer for both.
\end{remark}

Now we propose our PSKK method for density function $f\in S^\alpha_{\beta,p,q}$ as follows. Given $M$ independent random vectors $Y_1,\ldots,Y_M$ whose density function is $f$, we consider an unspecified parameter $a>0$ and denote $\widetilde{Y}_m$ as the remainder of $Y_m$ modulo $2a$ for all $1\le m \le M$. More specifically, for each $1\le m \le M$, we transform the random vector $Y_m=(Y_{m,1},\ldots,Y_{m,d})^\top$ into $\widetilde{Y}_m=(\widetilde{Y}_{m,1},\ldots,\widetilde{Y}_{m,d})^\top$ by defining
\begin{equation}
    \widetilde{Y}_{m,j}= (Y_{m,j}\ \text{mod $2a$)}-a\in [-a,a),\nonumber
\end{equation}
for all $1\le j\le d$. Then $\widetilde{Y}_1,\ldots,\widetilde{Y}_M$ are iid random vectors taking values in $[-a,a)^d$. The  density function of
$\widetilde{Y}_1$, denoted as $\widetilde{f}$, and its partial derivatives are characterized in the following proposition.

\begin{proposition}\label{prop_wtilde_f}
    The density function of $\widetilde{Y}_1$ is given pointwise  by
    \begin{equation}\label{wtilde_f}
    \widetilde{f}(\bm{x})=\sum_{\bm{k}\in\mathbb{Z}^d}f(\bm{x}+2a\bm{k}),\quad\forall\bm{x}\in[-a,a]^d,
    \end{equation}
    with the convention that the series may diverge to $+\infty$. Furthermore, if the original density $f$ satisfies the exponential decay condition up to order $\alpha$, then for any multi-index $\bm{\tau}\in\{0{:}\alpha\}^d$, the partial derivative $\widetilde{f}^{(\bm{\tau})}$ exists and is represented by the uniformly convergent series
    \begin{equation}\label{tilde_f_tau}
    \widetilde{f}^{(\bm{\tau})}(\bm{x})=\sum_{\bm{k}\in\mathbb{Z}^d}f^{(\bm{\tau})}(\bm{x}+2a\bm{k}),\quad \forall\bm{x}\in[-a,a]^d.
    \end{equation}
\end{proposition}

\begin{proof}
    We first prove (\ref{wtilde_f}) using a method analogous to that in \cite[page 196]{nodehi2021estimation}. According to the definition of $\widetilde{Y}_1$, for any measurable set $B\subset[-a,a)^d$, we have
    \begin{equation*}
        \mathbb{P}(\widetilde{Y}_1\in B)=\mathbb{P}\left(Y_1\in \bigcup_{\bm{k}\in \mathbb{Z}^d}(B+2a\bm{k}) \right),
    \end{equation*}
    where $B+2a\bm{k}=\{\bm{x}+2a\bm{k}:\bm{x}\in B\}$ denotes the set $B$ translated by the vector $2a\bm{k}$. Since $B\subset[-a,a)^d$, these translated sets are pairwise disjoint. So we have
    \begin{align}
        \mathbb{P}(\widetilde{Y}_1\in B)&=\sum_{\bm{k}\in \mathbb{Z}^d}\mathbb{P}(Y_1\in B+2a\bm{k})=\sum_{\bm{k}\in \mathbb{Z}^d}\int_{B+2a\bm{k}}f(\bm{x})d\bm{x}\nonumber\\
        &=\sum_{\bm{k}\in \mathbb{Z}^d}\int_{B}f(\bm{x}+2a\bm{k})d\bm{x}
        =\int_B \left(\sum_{\bm{k}\in \mathbb{Z}^d} f(\bm{x}+2a\bm{k})\right)d\bm{x},\label{half_cube}
    \end{align}
    where the last equation uses Tonelli’s theorem, since $f$ is non-negative. Equation~(\ref{half_cube}) shows that (\ref{wtilde_f}) holds for all $\bm{x}\in[-a,a)^d$. Note that $\widetilde{f}$ is a density function, we can extend it pointwise to the closed cube $[-a,a]^d$ using the same formula, which yields (\ref{wtilde_f}) on $[-a,a]^d$.

    Next, we prove the uniform convergence of the series in (\ref{tilde_f_tau}). Since $\Vert f\Vert_{\alpha,\beta,p,q}<\infty$, for any $\bm{\tau}\in\{0{:}\alpha\}^d,\bm{k}\in\mathbb{Z}^d\setminus\{\bm{0}\}$ and any $\bm{x}\in[-a,a]^d$, we have
    \begin{align*}
    |f^{(\bm{\tau})}(\bm{x}+2a\bm{k})|&\le \Vert f\Vert_{\alpha,\beta,p,q}\exp{(-\beta\Vert\bm{x}+2a\bm{k} \Vert_p^q)}\\
    &\le \Vert f\Vert_{\alpha,\beta,p,q}\exp{(-\beta\Vert\bm{x}+2a\bm{k} \Vert_\infty^q)}\\
    &\le \Vert f\Vert_{\alpha,\beta,p,q}\exp{(-\beta (2a\Vert\bm{k}\Vert_\infty-a)^q)},
    \end{align*}
    where the last inequality uses the fact that $\Vert \bm{x}+2a\bm{k}\Vert_\infty\ge 2a\Vert\bm{k\Vert}_\infty-a$, $\forall\bm{x}\in[-a,a]^d$. Note that
    \begin{equation*}
   \sum_{\bm{k}\in\mathbb{Z}^d\setminus\{\bm{0}\} }\exp{(-\beta (2a\Vert\bm{k}\Vert_\infty-a)^q)}<\infty. 
    \end{equation*}
    So the series in (\ref{tilde_f_tau}) converges uniformly. Using this uniform convergence together with (\ref{wtilde_f}), we can prove by induction that $\widetilde{f}^{(\bm{\tau})}$ exists and has the form given in (\ref{tilde_f_tau}).
\end{proof}

Proposition~\ref{prop_wtilde_f} shows that for $f\in S^\alpha_{\beta,p,q}$ and any $\bm{\tau}\in\{0{:}\alpha\}^d$, $\widetilde{f}^{(\bm{\tau})}$ is continuous and periodic on $[-a,a]^d$. Hence, $\widetilde{f}$ is $C^{(\alpha,\ldots,\alpha)}$-periodic on $[-a,a]^d$, and therefore belongs to the scaled Korobov space $\mathcal{H}(K_{\alpha,a,d})$.

\begin{remark}
    Alternatively, to transform $f$ into a periodic density function on $[-a,a]^d$, one may define a smooth transformation $T:[-a,a]^d\to\mathbb{R}^d$ 
    such that $Y_1=T(Z_1)$ for some random vector $Z_1$. The density of $Z_1$ is then given by $g(\bm{z}) = f(T(\bm{z})) \left| \det(DT(\bm{z})) \right|$, where $DT(\bm{z})$ is the Jacobian of $T$. Although periodization in quasi-Monte Carlo methods (e.g., polynomial transformations \cite{korobov1963number} or
    trigonometric transformations \cite{laurie1996periodizing,sidi1993new}) can construct $T$ to yield periodic $g$ for densities supported on $[0,1]^d$, extending such smooth transformations to $\mathbb{R}^d$ is hindered by the unbounded domain, rendering $T$ practically infeasible.  
    Consequently, we consider the modulo operation to periodize the density function, which, despite losing partial sample information, offers greater operational simplicity.
\end{remark}

We perform the density estimation~(\ref{estimation_RKHS}) on $\widetilde{f}$ in the  Korobov space $\mathcal{H}(K_{\alpha,a,d})$ and obtain an estimator $\widetilde{f}^\lambda_{\widetilde{\bm{Y}}}$, where $\widetilde{\bm{Y}}:=\{\widetilde{Y}_1,\ldots,\widetilde{Y}_{M}\}$. Finally, the PSKK estimator of $f$ is set to be
\begin{equation}\label{estimation_R^d}
f^{\lambda}_{\bm{Y}}(\bm{x}):=\begin{cases}
    \max\{\widetilde{f}^\lambda_{\widetilde{\bm{Y}}}(\bm{x}),0\}, &\text{ for }\bm{x}\in[-a,a]^d,\\
    0, &\text{ for }\bm{x}\in\mathbb{R}^d\setminus[-a,a]^d.
    \end{cases}
\end{equation}

\begin{remark}

The estimator~(\ref{estimation_R^d}) is constructed by applying the positive part operator $\max\{\cdot,0\}$ to ensure non-negativity. Since the target density $f$ is non-negative, this operation also reduces the MISE. However, the operator $\max\{\cdot,0\}$ may introduce kinks. 
Alternatively, one may consider a similar estimator that omits the $\max\{\cdot,0\}$ operator.
This estimator is smoother but may take negative values. We note that the convergence rate established in this paper applies to both estimators. In this paper, we place a greater emphasis on the global MISE and non-negativity than on smoothness, and we use the estimator~(\ref{estimation_R^d}). In contexts where smoothness is required, the estimator without the $\max\{\cdot,0\}$ operator may be preferred.

\end{remark}

For the MISE of the PSKK estimator $f^{\lambda}_{\bm{Y}}$, we have 
\begin{align}
&\mathbb{E}\left[\int_{\mathbb{R}^d}\left|f_{\bm{Y}}^\lambda(\bm{x})-f(\bm{x})\right|^2d\bm{x}\right]\nonumber\\
\le& \mathbb{E}\left[\int_{[-a,a]^d}\left|\widetilde{f}_{\widetilde{\bm{Y}}}^\lambda(\bm{x})-f(\bm{x})\right|^2d\bm{x}\right]+\int_{\mathbb{R}^d\setminus[-a,a]^d}|f(\bm{x})|^2d\bm{x}\nonumber\\
\le&2\Vert f-\widetilde{f}\Vert^2_{L^2([-a,a]^d)}+ 2\mathbb{E}\left[\Vert\widetilde{f}^\lambda_{\widetilde{\bm{Y}}}- \widetilde{f}\Vert^2_{L^2([-a,a]^d)}\right] + \int_{\mathbb{R}^d\setminus[-a,a]^d}|f(\bm{x})|^2d\bm{x}.\label{MISE_Rd}
\end{align}
Here, the first term on the right-hand side of (\ref{MISE_Rd}) measures the $L^2$-distance between $f$ and $\widetilde{f}$. The second term quantifies the MISE of estimating $\widetilde{f}$ in the scaled Korobov space. The third term corresponds to the truncation error caused by restricting the support of $f^\lambda_{\bm{Y}}$ to $[-a,a]^d$. Before bounding these three terms individually, we prove a key technical lemma.

\begin{lemma}\label{perodic_dif}
Suppose a function $h$ satisfies the exponential decay condition up to order $0$, i.e., $\Vert h\Vert_{0,\beta,p,q}=\sup_{\bm{x}\in\mathbb{R}^d}|\exp(\beta\Vert\bm{x}\Vert_{p}^q )h(\bm{x})|<\infty$, for some $\beta>0,1\le p\le\infty$ and $1\le q<\infty$. For $a>0$, let $\widetilde{h}(\bm{x})=\sum_{\bm{k}\in\mathbb{Z}^d}h(\bm{x}+2a\bm{k}),\ \forall \bm{x}\in[-a,a]^d$. Write
\begin{equation}\label{C_dq}
     C_{d,q}:=\sum_{w = 1}^{\infty}\left((2w+1)^d-(2w-1)^d\right)e^{-\left((2w-1)^q-1\right)}<\infty.
\end{equation}
Then for any any $a\ge \beta^{-1/q}$ and $\bm{x}\in[-a,a]^d$, the following inequality holds
    \begin{equation}\label{per_error}
        |h(\bm{x})-\widetilde{h}(\bm{x})|\le C_{d,q} \Vert h\Vert_{0,\beta,p,q} e^{-\beta a^q}.
    \end{equation}
\end{lemma}

\begin{proof}
For any $\bm{x}\in[-a,a]^d$, we have 
\begin{align}
    |h(\bm{x})-\widetilde{h}(\bm{x})|\le&\sum_{\bm{k}\in\mathbb{Z}^d\setminus\{\bm{0}\}}|h(\bm{x}+2a\bm{k})|\nonumber\\
    \le& \Vert h\Vert_{0,\beta,p,q}\sum_{\bm{k}\in\mathbb{Z}^d\setminus\{\bm{0}\}}\exp{(-\beta\Vert\bm{x}+2a\bm{k} \Vert_p^q)}\nonumber\\
    \le& \Vert h\Vert_{0,\beta,p,q}\sum_{\bm{k}\in\mathbb{Z}^d\setminus\{\bm{0}\}}\exp{(-\beta\Vert\bm{x}+2a\bm{k} \Vert_{\infty}^q)}\nonumber\\
   \le & \Vert h\Vert_{0,\beta,p,q}\sum_{w = 1}^{\infty}\sum_{\Vert \bm{k}\Vert_{\infty}=w}\exp(-\beta[(2w-1)a]^q)\label{infty_norm}\\
    =&\Vert h\Vert_{0,\beta,p,q}e^{-\beta a^q}\sum_{w = 1}^{\infty}\left((2w+1)^d-(2w-1)^d\right)e^{-\beta a^q\left((2w-1)^q-1\right)}\nonumber\\
    \le&\Vert h\Vert_{0,\beta,p,q}e^{-\beta a^q}\sum_{w = 1}^{\infty}\left((2w+1)^d-(2w-1)^d\right)e^{-\left((2w-1)^q-1\right)}\label{ineq_in_lemma}\\
    =& C_{d,q} \Vert h\Vert_{0,\beta,p,q} e^{-\beta a^q}.\nonumber
\end{align}
Here in (\ref{infty_norm}), we sum over $\bm{k}\in\mathbb{Z}^d\setminus\{\bm{0}\}$ by grouping the terms based on the values of $\Vert\bm{k\Vert}_\infty$, and note that for $\bm{x}\in[-a,a]^d$, $\Vert \bm{x}+2a\bm{k}\Vert_\infty\ge 2a\Vert\bm{k\Vert}_\infty-a$. In (\ref{ineq_in_lemma}), we apply the condition $\beta a^q\ge 1$. Hence, we obtain (\ref{per_error}).
\end{proof}

Lemma~\ref{perodic_dif} yields upper bounds for $\Vert f-\widetilde{f}\Vert^2_{L^2([-a,a]^d)}$ and $\Vert \widetilde{f}\Vert_{K_{\alpha,a,d}}$.

\begin{corollary}\label{term1}
Suppose $f\in S^{\alpha}_{\beta,p,q}$ for some $\alpha\in\mathbb{N},\beta>0,1\le p\le\infty$ and $1\le q<\infty$. Let $\widetilde{f}$ be defined in (\ref{wtilde_f}) and let $C_{d,q}$ be the constant defined in (\ref{C_dq}). For any $a\ge \beta^{-1/q}$, we have 
\begin{equation}
    \Vert f-\widetilde{f}\Vert^2_{L^2([-a,a]^d)}\le C_{d,q}^2\Vert f\Vert_{0,\beta,p,q}^2(2a)^d e^{-2\beta a^q}.\nonumber
\end{equation}
\end{corollary}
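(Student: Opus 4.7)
The plan is to derive Corollary~\ref{term1} as an essentially immediate consequence of Lemma~\ref{perodic_dif} applied pointwise, followed by integration of the resulting uniform bound over $[-a,a]^d$.

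First, I would verify that the hypotheses of Lemma~\ref{perodic_dif} are met. Since $f \in S^{\alpha}_{\beta,p,q}$ with $\alpha \in \mathbb{N}$, the definition of $\|\cdot\|_{\alpha,\beta,p,q}$ includes the case $\bm{\tau} = \bm{0}$, so in particular $\|f\|_{0,\beta,p,q} \le \|f\|_{\alpha,\beta,p,q} < \infty$. Thus $f$ itself satisfies the exponential decay condition up to order $0$ with the same parameters $\beta, p, q$. The construction of $\widetilde{f}$ in (\ref{wtilde_f}) coincides with the $\widetilde{g}$ in Lemma~\ref{perodic_dif} when one sets $g = f$, so the lemma applies verbatim.

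Next, for any $a \ge \beta^{-1/q}$, Lemma~\ref{perodic_dif} provides the pointwise bound
\begin{equation*}
  |f(\bm{x}) - \widetilde{f}(\bm{x})| \le C_{d,q} \, \|f\|_{0,\beta,p,q} \, e^{-\beta a^q}, \quad \forall \bm{x} \in [-a,a]^d,
\end{equation*}
where $C_{d,q}$ is the constant from the lemma. Squaring both sides yields a uniform bound on $|f - \widetilde{f}|^2$ over the cube $[-a,a]^d$.

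Finally, integrating this uniform bound over $[-a,a]^d$ (which has Lebesgue measure $(2a)^d$) gives
\begin{equation*}
  \|f - \widetilde{f}\|^2_{L^2([-a,a]^d)} \le C_{d,q}^2 \, \|f\|_{0,\beta,p,q}^2 \, (2a)^d \, e^{-2\beta a^q},
\end{equation*}
which is exactly the claimed inequality. There is no substantive obstacle here: all the analytic work has already been absorbed into Lemma~\ref{perodic_dif}, and the corollary is just the $L^2$-norm restatement of its pointwise conclusion. The only minor thing to confirm is the implicit monotonicity of the decay parameter (that the same $\|f\|_{0,\beta,p,q}$ controlling pointwise decay is the one appearing in the $S^\alpha_{\beta,p,q}$ norm), which follows directly from the definitions.
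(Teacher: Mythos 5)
Your proof is correct and matches what the paper intends: the paper gives no separate proof of Corollary~\ref{term1}, treating it as the immediate consequence of applying Lemma~\ref{perodic_dif} pointwise with $g=f$ (valid since $\Vert f\Vert_{0,\beta,p,q}\le\Vert f\Vert_{\alpha,\beta,p,q}<\infty$) and integrating the squared uniform bound over the cube of measure $(2a)^d$. No gaps.
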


\begin{corollary}\label{term2}
Suppose $f\in S^{\alpha}_{\beta,p,q}$ for some $\alpha\in\mathbb{N},\beta>0,1\le p\le\infty$ and $1\le q<\infty$. Let $\widetilde{f}$ be defined in (\ref{wtilde_f}) and let $C_{d,q}$ be the constant defined in (\ref{C_dq}). For any $a\ge\max\{\beta^{-1/q},\frac{1}{2} \}$, we have 
\begin{equation}\label{wide_norm}
    \Vert \widetilde{f}\Vert_{K_{\alpha,a,d}}^2\le 2\Vert f\Vert_\alpha^2+ 2C_{d,q}^2\Vert f\Vert_{\alpha,\beta,p,q}^2(2\sqrt{2}a)^{2d}e^{-2\beta a^q}.
\end{equation}
\end{corollary}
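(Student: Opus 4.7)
My starting point is the closed-form expression for the scaled Korobov inner product recorded just after the definition of $\mathcal{H}(K_{\alpha,a,d})$:
\begin{equation*}
\Vert \widetilde{f}\Vert_{K_{\alpha,a,d}}^2 = \sum_{u\subset\{1{:}d\}}\int_{[-a,a]^{|u|}}\left(\int_{[-a,a]^{d-|u|}}\partial_u^\alpha \widetilde{f}(\bm{x})\,d\bm{x}_{-u}\right)^2 d\bm{x}_u.
\end{equation*}
The plan is to split $\partial_u^\alpha \widetilde{f}=\partial_u^\alpha f+(\partial_u^\alpha \widetilde{f}-\partial_u^\alpha f)$, apply the elementary inequality $(A+B)^2\le 2A^2+2B^2$ inside the outer integral, and bound each of the two resulting sums separately.

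For the first sum, I extend the integrals from $[-a,a]$ to $\mathbb{R}$ after inserting absolute values around $\partial_u^\alpha f$, which directly produces $\Vert f\Vert_\alpha^2$. For the second sum, I use identity~(\ref{tilde_f_tau}), which says that $\partial_u^\alpha \widetilde{f}$ is nothing other than the $2a$-wrapping of $\partial_u^\alpha f$. Since $\bm{\tau}=(\alpha\mathbf{1}_{j\in u})_{j=1}^d$ belongs to $\{0{:}\alpha\}^d$, the definition of $S^\alpha_{\beta,p,q}$ gives $\Vert\partial_u^\alpha f\Vert_{0,\beta,p,q}\le \Vert f\Vert_{\alpha,\beta,p,q}$, so Lemma~\ref{perodic_dif} applied to $g=\partial_u^\alpha f$ (the hypothesis $a\ge\beta^{-1/q}$ is guaranteed by the assumption on $a$) yields the uniform pointwise bound
\begin{equation*}
\bigl|\partial_u^\alpha \widetilde{f}(\bm{x})-\partial_u^\alpha f(\bm{x})\bigr|\le C_{d,q}\,\Vert f\Vert_{\alpha,\beta,p,q}\,e^{-\beta a^q}\quad \text{for every } \bm{x}\in[-a,a]^d.
\end{equation*}
Integrating this uniform bound over the inner box contributes a factor $(2a)^{d-|u|}$, squaring contributes $(2a)^{2(d-|u|)}$, and integrating over the outer box contributes $(2a)^{|u|}$, so the second sum becomes $C_{d,q}^2\Vert f\Vert_{\alpha,\beta,p,q}^2 e^{-2\beta a^q}\sum_{u\subset\{1{:}d\}}(2a)^{2d-|u|}$.

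The only combinatorial step is the last one: I rewrite
\begin{equation*}
\sum_{u\subset\{1{:}d\}}(2a)^{2d-|u|}=(2a)^{2d}\bigl(1+(2a)^{-1}\bigr)^d\le (2a)^{2d}\cdot 2^d=(2\sqrt{2}\,a)^{2d},
\end{equation*}
where the inequality uses precisely the hypothesis $a\ge 0.5$ (so that $(2a)^{-1}\le 1$). Combining the two pieces with the factor $2$ from $(A+B)^2\le 2A^2+2B^2$ yields exactly the bound~(\ref{wide_norm}). The main (minor) obstacle is this combinatorial bookkeeping; everything else is a direct application of Lemma~\ref{perodic_dif} together with the Korobov norm formula and the identity~(\ref{tilde_f_tau}).
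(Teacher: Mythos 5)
Your proposal is correct and follows essentially the same route as the paper: both decompose $\widetilde{f}=f+(\widetilde{f}-f)$, apply Lemma~\ref{perodic_dif} to the mixed derivatives via identity~(\ref{tilde_f_tau}) to get the uniform $C_{d,q}\Vert f\Vert_{\alpha,\beta,p,q}e^{-\beta a^q}$ bound, and finish with the same combinatorial estimate $\sum_{u}(2a)^{2d-|u|}\le 2^d(2a)^{2d}$. The only cosmetic difference is that you apply $(A+B)^2\le 2A^2+2B^2$ inside each integral, whereas the paper first uses the triangle inequality for the Korobov norm and then the same elementary inequality on the squared norms.
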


\begin{proof}
Let $g(\bm{x}):=\widetilde{f}(\bm{x})-f(\bm{x})$ for $\bm{x}\in[-a,a]^d$. According to Lemma~\ref{perodic_dif}, for any $a\ge \beta^{-1/q}  $, $\bm{\tau}\in\{0{:}\alpha\}^d$, and $\bm{x}\in[-a,a]^d$, we have 
\begin{equation}\label{partial_g}
    |g^{(\bm{\tau})}(\bm{x})|\le C_{d,q}\Vert f^{(\bm{\tau})}\Vert_{0,\beta,p,q}e^{-\beta a^q}\le C_{d,q}\Vert f\Vert_{\alpha,\beta,p,q}e^{-\beta a^q}.
\end{equation}
We aim to bound $\Vert \widetilde{f}\Vert_{K_{\alpha,a,d}}=\Vert f+g\Vert_{K_{\alpha,a,d}}$. Although $f$ and $g$ may not lie in the Korobov space, we can still compute their Korobov norm, and the triangle inequality yields
\begin{equation}
\begin{aligned}
    \Vert \widetilde{f}\Vert_{K_{\alpha,a,d}}\le \Vert f\Vert_{K_{\alpha,a,d}}+\Vert g\Vert_{K_{\alpha,a,d}}\le \Vert f\Vert_\alpha + \Vert g\Vert_{K_{\alpha,a,d}}.\nonumber
\end{aligned}
\end{equation}
Note that for $a\ge\max\{\beta^{-1/q},\frac{1}{2} \}$, we have 
\begin{align}
\Vert g\Vert_{K_{\alpha,a,d}}^2&=\sum_{u\subset\{1{:}d\}}\int_{[-a,a]^{|u|}}\left(\int_{[-a,a]^{d-|u|}}\partial_u^\alpha g(\bm{x})d\bm{x}_{-u}\right)^2d\bm{x}_{u}\nonumber\\
&\le \sum_{u\subset\{1{:}d\}}\int_{[-a,a]^{|u|}}\left(\int_{[-a,a]^{d-|u|}}C_{d,q}\Vert f\Vert_{\alpha,\beta,p,q}e^{-\beta a^q}d\bm{x}_{-u}\right)^2d\bm{x}_{u}\label{g_first}\\
&=C_{d,q}^2\Vert f\Vert_{\alpha,\beta,p,q}^2e^{-2\beta a^q}\sum_{u\subset\{1{:}d\}}(2a)^{2d-|u|}\nonumber\\
&\le  C_{d,q}^2\Vert f\Vert_{\alpha,\beta,p,q}^2 2^d(2a)^{2d}e^{-2\beta a^q}\label{g_second},
\end{align}
where in (\ref{g_first}) we use (\ref{partial_g}) and in (\ref{g_second}) we use $2a\ge 1$. So (\ref{wide_norm}) follows from the inequality $(x+y)^2\le 2x^2+2y^2$.
\end{proof}

To analyze the third term in (\ref{MISE_Rd}), we recall \cite[Proposition 8]{nuyens2023scaled} as follows.

\begin{proposition}\label{truncation}
    Suppose a function $h$ satisfies the  exponential decay condition up to order $0$, i.e., $\Vert h\Vert_{0,\beta,p,q}=\sup_{\bm{x}\in\mathbb{R}^d}|\exp(\beta\Vert\bm{x}\Vert_{p}^q )h(\bm{x})|<\infty$,
    for some $\beta>0,1\le p\le\infty$ and $1\le q<\infty$. Then for any $a \ge \beta^{-1/q}$ we have 
    \begin{equation}
        \left|\int_{\mathbb{R}^d\setminus[-a,a]^d}h(\bm{x})d\bm{x}\right|\le \frac{2^d d}{\beta^{d/q}q}\left\lceil d/q \right\rceil !   \Vert h\Vert_{0,\beta,p,q}(\beta a^q)^{d/q-1}\exp(-\beta a^q).\nonumber
    \end{equation}
\end{proposition}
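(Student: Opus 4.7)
My plan is to reduce the tail integral to an upper incomplete Gamma function via an $\ell^\infty$-shell decomposition, and then to estimate that Gamma value explicitly. First, I would invoke the decay hypothesis to get the pointwise bound $|g(\bm{x})|\le \Vert g\Vert_{0,\beta,p,q}\exp(-\beta\Vert\bm{x}\Vert_p^q)$, and then sharpen this by the elementary norm comparison $\Vert\bm{x}\Vert_\infty\le\Vert\bm{x}\Vert_p$ (valid for all $1\le p\le\infty$) to reduce the problem to estimating
\begin{equation*}
I(a):=\int_{\mathbb{R}^d\setminus[-a,a]^d}\exp(-\beta\Vert\bm{x}\Vert_\infty^q)\,d\bm{x}.
\end{equation*}

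Second, I would slice the tail region by $\ell^\infty$-shells. The set $\{\bm{x}:\Vert\bm{x}\Vert_\infty=r\}$ is the boundary of $[-r,r]^d$ and consists of $2d$ faces of side length $2r$, so its $(d-1)$-dimensional Hausdorff measure equals $2d(2r)^{d-1}$. The coarea formula (or a direct polar-type change of variables against $\Vert\cdot\Vert_\infty$, which is smooth off a measure-zero set of coordinate hyperplanes) then yields
\begin{equation*}
I(a)=\int_a^\infty 2d(2r)^{d-1}\exp(-\beta r^q)\,dr,
\end{equation*}
and the substitution $u=\beta r^q$ rewrites this as $\frac{2^d d}{q\beta^{d/q}}\,\Gamma(d/q,\beta a^q)$, where $\Gamma(s,x):=\int_x^\infty t^{s-1}e^{-t}\,dt$ is the upper incomplete Gamma function.

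The decisive step is to prove the tail estimate $\Gamma(s,x)\le \lceil s\rceil!\,x^{s-1}e^{-x}$ for $x\ge 1$, applied with $s=d/q$ and $x=\beta a^q>1$ (note $\beta a^q>1$ since $a>\beta^{-1/q}$). I would substitute $t=x(1+u)$ to write $\Gamma(s,x)=x^se^{-x}\int_0^\infty(1+u)^{s-1}e^{-xu}\,du$, then use $(1+u)^{s-1}\le(1+u)^{n-1}$ for $u\ge 0$ with $n:=\lceil s\rceil$, expand by the binomial theorem, and integrate term by term to obtain
\begin{equation*}
\Gamma(s,x)\le x^{s-1}e^{-x}\sum_{k=0}^{n-1}\binom{n-1}{k}k!,
\end{equation*}
after absorbing the factors $x^{-k}\le 1$. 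A short combinatorial calculation, $\sum_{k=0}^{n-1}\binom{n-1}{k}k!=(n-1)!\sum_{j=0}^{n-1}1/j!\le n!$ (direct for $n\le 2$; for $n\ge 3$ use $\sum_{j=0}^{n-1}1/j!<e\le n$), supplies the constant $\lceil s\rceil!$. Plugging back into the expression for $I(a)$ delivers the stated bound.

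The main obstacle is precisely this incomplete Gamma estimate with the sharp combinatorial constant $\lceil d/q\rceil!$; the norm comparison and the shell decomposition are routine once the correct scaling is identified.
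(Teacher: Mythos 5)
Your proof is correct, and every step checks out: the reduction $\Vert\bm{x}\Vert_\infty\le\Vert\bm{x}\Vert_p$, the shell identity $I(a)=\int_a^\infty 2d(2r)^{d-1}e^{-\beta r^q}\,dr$ (which can also be obtained without coarea, by writing $e^{-\beta\Vert\bm{x}\Vert_\infty^q}=\int_{\Vert\bm{x}\Vert_\infty}^\infty \beta q r^{q-1}e^{-\beta r^q}\,dr$ and applying Fubini), the substitution $u=\beta r^q$ producing exactly the prefactor $\frac{2^d d}{q\beta^{d/q}}$, and the incomplete-Gamma estimate $\Gamma(s,x)\le\lceil s\rceil!\,x^{s-1}e^{-x}$ for $x\ge 1$, including the combinatorial identity $\sum_{k=0}^{n-1}\binom{n-1}{k}k!=(n-1)!\sum_{j=0}^{n-1}1/j!\le n!$. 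Note, however, that the paper itself does not prove this proposition: it is imported verbatim as Proposition~8 of the cited reference on scaled lattice rules, so there is no internal argument to compare against. Your derivation is a legitimate self-contained substitute for that citation and recovers the stated constant exactly, which is the only delicate point; the hypothesis $a>\beta^{-1/q}$ is used precisely where you use it, to guarantee $x=\beta a^q>1$ in the Gamma tail bound.
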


Combining the results of Theorem~\ref{MISE_Kor}, Corollary~\ref{term1}, Corollary~\ref{term2}, along with Proposition~\ref{truncation}, we arrive at the following theorem.

\begin{theorem}\label{MISE_Rd_sim}
Given $d,\alpha\in\mathbb{N}$. Assume that the target density $f\in S^{\alpha}_{\beta,p,q}$ for some $\beta>0,1\le p\le\infty$ and $1\le q<\infty$. For $\epsilon \in (0,2-1/\alpha)$, take $\tau^*=1/(2\alpha)+\epsilon/2$, $\lambda^* = \mathcal{O}(M^{-1/(1+\tau^*)})$, and prime number $N^*=\mathcal{O}(M^{1/(\alpha-\epsilon)})$. For any $\eta>0$, let $a^* = \left[\frac{1}{2\beta}\log (\eta M)\right]^{1/q}$. Denote $\Vert f\Vert_{\max} := \max\{\Vert f\Vert_{\alpha,\beta,p,q},\Vert f\Vert_{\alpha}\}$ and let $V_{N^*}=V_{N^*}(X^*)$ with $X^*$ defined in (\ref{X*}). Then there exists a constant $C$ depending only on $\alpha,\beta,\eta,d,q$ and $\epsilon$ such that for $M\ge\max\{e^2/\eta,e^{2^{1-q}\beta}/\eta \}$ and the PSKK estimator $f^\lambda_{\bm{Y}}$ in (\ref{estimation_R^d}), we have
    \begin{equation}\label{MISE_order}
        \mathbb{E}\left[\int_{\mathbb{R}^d}\left|f_{\bm{Y}}^\lambda(\bm{x})-f(\bm{x})\right|^2d\bm{x}\right]
        \le C\Vert f\Vert_{\max}^2|\log M|^{\frac{2(\alpha+1)d}{q}}M^{-\frac{1}{1+1/(2\alpha)+\epsilon/2}}.
    \end{equation} 
\end{theorem}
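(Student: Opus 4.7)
I would start from the three-term decomposition~(\ref{MISE_Rd}),
\begin{equation*}
\mathbb{E}\bigl[\Vert f^\lambda_{\bm Y}-f\Vert^2_{L^2(\mathbb{R}^d)}\bigr]\le 2\Vert f-\widetilde f\Vert^2_{L^2([-a,a]^d)}+2\mathbb{E}\bigl[\Vert\widetilde f^\lambda_{\widetilde{\bm Y}}-\widetilde f\Vert^2_{L^2([-a,a]^d)}\bigr]+\int_{\mathbb{R}^d\setminus[-a,a]^d}|f(\bm x)|^2d\bm x,
\end{equation*}
evaluated at $a=a^*=[(2\beta)^{-1}\log(\eta M)]^{1/q}$. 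The assumption $M\ge\max\{e^2/\eta,e^{2^{1-q}\beta}/\eta\}$ is precisely what is needed so that $a^*\ge\max\{\beta^{-1/q},0.5\}$, which in turn makes Corollary~\ref{term1}, Corollary~\ref{term2}, Proposition~\ref{truncation}, and Theorem~\ref{MISE_Kor} applicable at $a=a^*$. A key observation is that $\exp(-2\beta(a^*)^q)=1/(\eta M)$ while $a^*$ itself is only polylogarithmic in $M$; thus every appearance of $a^*$ outside an exponential contributes at most a $(\log M)^{c}$ factor.

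For the first term, Corollary~\ref{term1} yields a bound of order $\Vert f\Vert_{0,\beta,p,q}^2(\log M)^{d/q}/M$. For the third term, I would apply Proposition~\ref{truncation} to $|f|^2$, which satisfies the exponential decay condition of order~$0$ with rate $2\beta$ and $\Vert|f|^2\Vert_{0,2\beta,p,q}=\Vert f\Vert_{0,\beta,p,q}^2$, giving a bound of order $(\log M)^{d/q-1}/M$; both are $o(M^{-1/(1+1/(2\alpha)+\epsilon)})$. For the middle term I invoke Theorem~\ref{MISE_Kor} with $\tau=\tau^*$ and $\delta=\epsilon/2$ (both admissible by the range of $\epsilon$), and bound $\Vert\widetilde f\Vert_{K_{\alpha,a^*,d}}^2$ via Corollary~\ref{term2} by $2\Vert f\Vert_\alpha^2+\mathcal{O}((\log M)^{2d/q}/M)=\mathcal{O}(\Vert f\Vert_{\max}^2)$. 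Substituting $\lambda^*\asymp M^{-1/(1+\tau^*)}$ and $N^*\asymp M^{1/(\alpha-\epsilon)}$, the three summands of Theorem~\ref{MISE_Kor} become (i) the projection term $\mathcal{O}(\Vert f\Vert_{\max}^2(\log M)^{2\alpha d/q}/M)$, (ii) the regularization term $\mathcal{O}(\Vert f\Vert_{\max}^2 M^{-1/(1+\tau^*)})$, and (iii) the sampling term $\mathcal{O}(\Vert f\Vert_{\max}^2(\log M)^{d\alpha\epsilon/q}M^{-1/(1+\tau^*)})$, where for (iii) I use that the $a$-dependent factor in Theorem~\ref{MISE_Kor} grows at most like $(a^*)^{d(2\alpha\tau^*-1)}=(a^*)^{d\alpha\epsilon}$.

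Combining the three contributions, the MISE is bounded by $C_1\Vert f\Vert_{\max}^2(\log M)^{c}M^{-1/(1+\tau^*)}$ for constants $C_1,c$ depending only on $\alpha,\beta,\eta,d,q,\epsilon$. To reach the advertised exponent I would invoke the elementary inequality $(\log M)^c\le C_2 M^{\epsilon'}$ valid for all $M\ge 1$ after enlarging $C_2$, with $\epsilon'>0$ chosen small enough that
\begin{equation*}
-\frac{1}{1+\tau^*}+\epsilon'\le -\frac{1}{1+1/(2\alpha)+\epsilon},
\end{equation*}
which is possible because $\tau^*=1/(2\alpha)+\epsilon/2<1/(2\alpha)+\epsilon$ leaves a strictly positive gap between the two exponents. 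Absorbing all accumulated constants into a single $C=C(\alpha,\beta,\eta,d,q,\epsilon)$ yields~(\ref{MISE_order}). The principal obstacle is not any single estimate but the bookkeeping of the polylogarithmic factors in $a^*$ that appear simultaneously in $\Vert\widetilde f\Vert_{K_{\alpha,a^*,d}}^2$, in the projection prefactor $(2a^*)^{2\alpha d}$, and in the $a$-dependent constant of Theorem~\ref{MISE_Kor}; the deliberate slack between $\tau^*$ and the target exponent is exactly what is needed to swallow these factors without spoiling the rate.
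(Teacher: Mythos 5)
Your proposal is correct and follows essentially the same route as the paper: the three-term decomposition~(\ref{MISE_Rd}), Corollaries~\ref{term1} and~\ref{term2}, Proposition~\ref{truncation} applied to $|f|^2$ with rate $2\beta$, Theorem~\ref{MISE_Kor} with $\delta=\epsilon/2$, and the observation that the stated lower bound on $M$ is exactly what guarantees $a^*\ge\max\{\beta^{-1/q},0.5\}$. Your final step is in fact slightly more explicit than the paper's, which compresses the absorption of the polylogarithmic factors via the slack between $\tau^*=1/(2\alpha)+\epsilon/2$ and the target exponent $1/(2\alpha)+\epsilon$ into the single phrase ``plugging the values of $\tau^*,\lambda^*$ and $N^*$.''
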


\begin{proof}
Since $\Vert f\Vert_{0,\beta,p,q}\le \Vert f\Vert_{\alpha,\beta,p,q}<\infty$, we have $f\in L^2(\mathbb{R}^d)$ and
\begin{equation}
    \Vert f^2\Vert_{0,2\beta,p,q}=\sup_{\bm{x}\in\mathbb{R}^d}|\exp(2\beta\Vert\bm{x}\Vert_{p}^q )|f(\bm{x})|^2|=\Vert f\Vert_{0,\beta,p,q}^2<\infty.\nonumber
\end{equation}
According to Proposition~\ref{truncation}, for $a\ge(2\beta)^{-1/q}$, we have $2\beta a^q\ge 1$ and 
\begin{align}
\int_{\mathbb{R}^d\setminus[-a,a]^d}|f(\bm{x})|^2d\bm{x}
\le& \frac{2^d d}{(2\beta)^{d/q}q}\left\lceil d/q \right\rceil !   \Vert f\Vert_{0,\beta,p,q}^2(2\beta a^q)^{d/q-1}\exp(-2\beta a^q)\nonumber\\
\le&C_{d,q}'\Vert f\Vert_{0,\beta,p,q}^2(2a)^d\exp(-2\beta a^q),\label{term3}    
\end{align}
where $C_{d,q}'=\frac{d}{q}\left\lceil d/q \right\rceil !$. Substituting inequality~(\ref{term3}), the results of Theorem~\ref{MISE_Kor}, Corollary~\ref{term1}, and Corollary~\ref{term2}, into inequality~(\ref{MISE_Rd}), we obtain that for any $\delta\in (0,\alpha/2),\tau\in(1/(2\alpha),1],\lambda\in (0,1)$ and $a\ge\max\{\beta^{-\frac1q},\frac{1}{2}\}$,
\begin{align}
&\mathbb{E}\left[\int_{\mathbb{R}^d}\left|f_{\bm{Y}}^\lambda(\bm{x})-f(\bm{x})\right|^2d\bm{x}\right]\nonumber\\
\le&2C_{d,q}^2\Vert f\Vert_{0,\beta,p,q}^2(2a)^d e^{-2\beta a^q}+\frac{2\left((2a)^{-\tau-1}+\frac{1}{a}\left(\frac{a}{\pi}\right)^{2\alpha\tau}\zeta(2\alpha\tau)\right)^{d}}{M\lambda^\tau}\nonumber\\
&+4\lambda\left(\Vert f\Vert_\alpha^2+ C_{d,q}^2\Vert f\Vert_{\alpha,\beta,p,q}^2(2\sqrt{2}a)^{2d}e^{-2\beta a^q}\right)\nonumber\\
&+4C^2_{\alpha,\delta,d}\frac{(2a)^{2\alpha d}}{N^{\alpha-2\delta}}\left(\Vert f\Vert_\alpha^2+ C_{d,q}^2\Vert f\Vert_{\alpha,\beta,p,q}^2(2\sqrt{2}a)^{2d}e^{-2\beta a^q}\right)\nonumber\\
&+C_{d,q}'\Vert f\Vert_{0,\beta,p,q}^2(2a)^d\exp(-2\beta a^q),\label{MISE_with_par}    
\end{align}
where $C_{\alpha,\delta,q}$ and $C_{d,q}$ are constants defined in (\ref{constant_C}) and (\ref{C_dq}), respectively. Note that the condition $M\ge\max\{e^2/\eta,e^{2^{1-q}\beta}/\eta \}$ ensures $a^* \ge\max\{\beta^{-1/q},\frac{1}{2} \}$.
By substituting the value of $a^*$ into inequality (\ref{MISE_with_par}), we obtain that there exists a constant $C_1=C_1(\alpha,\beta,\delta,\tau,d,q)$ depending only on $\alpha,\beta,\delta,\tau,d$ and $q$ such that 
\begin{align}
&\mathbb{E}\left[\int_{\mathbb{R}^d}\left|f_{\bm{Y}}^\lambda(\bm{x})-f(\bm{x})\right|^2d\bm{x}\right]\nonumber\\
\le& C_1\Vert f\Vert_{\max}^2 
\left[\frac{|\log(\eta M)|^{\frac{d}{q}}}{\eta M}+\frac{|\log(\eta M)|^{\frac{2\alpha d}{q}}}{N^{\alpha-2\delta}}\left(1+\frac{|\log(\eta M)|^{\frac{2d}{q}}}{\eta M}\right)\right.\nonumber\\
&\left.+\lambda\left(1+\frac{|\log(\eta M)|^{\frac{2d}{q}}}{\eta M}\right) + \frac{|\log(\eta M)|^{\frac{(2\alpha\tau -1)d}{q}}}{M\lambda^\tau}\right].    \label{MISE_without_a}
\end{align}
Taking $\delta=\epsilon/2$ and plugging the values of $\tau^*,\lambda^*$ and $N^*$ into inequality~(\ref{MISE_without_a}), we obtain that there exist  constants $C_2 = C_2(\alpha,\beta,\eta,d,q,\epsilon)$ and $C=C(\alpha,\beta,\eta,d,q,\epsilon)$ such that
\begin{align}
&\mathbb{E}\left[\int_{\mathbb{R}^d}\left|f_{\bm{Y}}^\lambda(\bm{x})-f(\bm{x})\right|^2d\bm{x}\right]\nonumber\\
\le& C_2\Vert f\Vert_{\max}^2 \left[\frac{(\log M)^{\frac{d}{q}}}{M}+\frac{(\log M)^{\frac{2\alpha d}{q}}}{M}\left(1+\frac{(\log M)^{\frac{2d}{q}}}{M}\right)\right.\nonumber\\
&\left.+M^{-\frac{1}{1+\tau^*}}\left(1+\frac{(\log M)^{\frac{2d}{q}}}{M}+(\log M)^{\frac{\alpha\epsilon d}{q}}\right)\right]\nonumber\\
\le& C\Vert f\Vert_{\max}^2 M^{-\frac{1}{1+\tau^*}}|\log M|^{\frac{2(\alpha+1)d}{q}}\nonumber\\
=&  C\Vert f\Vert_{\max}^2|\log M|^{\frac{2(\alpha+1)d}{q}}M^{-\frac{1}{1+1/(2\alpha)+\epsilon/2}}.\nonumber
\end{align}
\end{proof}

\begin{remark}
    Theorem~\ref{MISE_Rd_sim} shows that our PSKK estimator achieves an MISE convergence rate of $\mathcal{O}(|\log M|^{2(\alpha+1)d/q}M^{-1/(1+1/(2\alpha)+\epsilon/2)})$
    for arbitrarily small $\epsilon > 0$ under suitable parameter choices.
    Asymptotically, this yields a convergence rate of $\mathcal{O}(M^{-1/(1+1/(2\alpha)+\epsilon)})$, which matches the convergence rate obtained in \cite{kazashi2023density}
    while extending the applicability to $\mathbb{R}^d$ density estimation.  However, a limitation stems from the factor $|\log M|^{2(\alpha+1)d/q}$, which grows rapidly with the dimension $d$. Thus the PSKK method may face challenges in high-dimensional settings.
\end{remark}

\begin{remark}
    In related works on numerical integration \cite{kuo2003component,sloan1998quasi,sloan2001tractability} and density estimation \cite{kazashi2023density}, weighted function spaces are used to achieve dimension-independent convergence by exploiting the favorable anisotropic structure of the target function. Similarly, 
    if we were to consider a weighted scaled Korobov space---that is, replacing $r_{\alpha,a,d}(\bm{h})$ in the expression for $K_{\alpha,a,d}$ in (\ref{kernel_ser}) with 
    \begin{equation*}
        r_{\alpha,a,d,\bm{\gamma}}(\bm{h}):=\gamma_{s(\bm{h})}\prod_{j=1}^{d}r_{\alpha,a}(h_j),\quad s(\bm{h}):=\{j\in \{1{:}d\}:h_j\ne 0 \},
    \end{equation*}
    where $\bm{\gamma}=(\gamma_u)_{u\subset\{1{:}d\} }$ is the set of weights and $r_{\alpha,a}$ is defined in (\ref{r})---we may achieve a dimension-independent convergence result only for density estimation over $\mathcal{H}(K_{\alpha,a,d})$ in Theorem \ref{MISE_Kor}, which may improve the convergence rate of the second term in (\ref{MISE_Rd}). However, a residual dependence on $d$ would persist due to the density truncation error (see Proposition~\ref{truncation}) and the discrepancy between the wrapped density $\widetilde{f}$ and the original density $f$ (see Corollary~\ref{term1} and Corollary~\ref{term2}). These components introduce a factor of the form $|\log M|^{2d/q}$ into the MISE bound, which would lead to a total convergence rate of $\mathcal{O}(|\log M|^{2 d/q}M^{-1/(1+1/(2\alpha)+\epsilon)})$ under an approach similar to Theorem~\ref{MISE_Rd_sim}. This reveals that the dimensional dependence is intrinsic to the method and cannot be eliminated by working with weighted spaces. 
    For this reason, we use the unweighted space in this work. This choice avoids the necessity of selecting appropriate weights, which is often challenging due to a lack of prior information about the target density. Moreover, it circumvents the strong anisotropic assumptions required for dimension-independent convergence, which may be unjustified in practical problems where the density function $f$ exhibits comparable dependence on different groups of variables. This approach also yields computational benefits, as the constructed scaled lattice point set $X^*$ becomes reusable across different problems.

\end{remark}

\section{Implementing the PSKK method}\label{sec5}
We outline our PSKK method as Algorithm~\ref{alg:buildtree}. First,  set the theoretical parameters mentioned in Theorem~\ref{MISE_Rd_sim}. Second, preprocess the data by applying modulo reduction into $[-a,a)^d$ and use the CBC algorithm in \cite{cools2021fast} to generate lattice points. Third, numerically solve the linear system~(\ref{linear system}). Notably, when using lattice points to construct $X^*$ in (\ref{X*}), the matrix $A$ in (\ref{linear system}) exhibits a circulant matrix structure that enables efficient computation. The detailed
calculation of $A$ is provided in Appendix~\ref{SM_computing_A}. Finally, we obtain the PSKK estimator as in (\ref{estimation_R^d}).

\begin{algorithm}
\caption{Periodic scaled Korobov kernel method}
\label{alg:buildtree}
\begin{algorithmic}[1]
\REQUIRE{$M$ observed samples $Y_1,\ldots,Y_M\in\mathbb{R}^d$. Prior knowledge that $f\in S^\alpha_{\beta,p,q}$ for some $\alpha\in\mathbb{N},\beta>0,1\le p\le\infty$ and $1\le q<\infty$.}
\STATE{Select $\epsilon \in (0,2-1/\alpha)$ and $\eta>0$. Take a prime number $N$ such that $N=\mathcal{O}(M^{\frac{1}{\alpha-\epsilon}})$. Take
    \begin{equation*}
        a =  \left(\frac{\log M+\log\eta}{2\beta}\right)^{\frac{1}{q}},\quad \bm{a}=(a,\ldots,a)^\top\in\mathbb{R}^d,\quad \lambda =\mathcal{O}(M^{-1/(1+\frac{1}{2\alpha}+\frac{\epsilon}{2})}).
    \end{equation*}}
\STATE{For $1\le m\le M$, let $\widetilde{Y}_m \in [-a,a)^d$ be the remainder of $Y_m$ modulo $2a$ minus $\bm{a}$. }
\STATE{Use the CBC construction in \cite{cools2021fast} to obtain the lattice point set $Y^* = \{\bm{y}_1,\ldots,\bm{y}_N\}\subset [0,1]^d$. Take $\bm{x}_i:=2a\bm{y}_i-\bm{a},i=1,\cdots,N$.}
\STATE{Employ the density estimation in $\mathcal{H}(K_{\alpha,a,d})$ as follows:}
\STATE {\hspace*{2em} Take $A=(A_{j,k})_{1\le j,k\le N}$ and $\bm{b}=(b_j)_{1\le j\le N}$ such that 
\begin{align*}
    A_{j,k}=&\left<K_{\alpha,a,d}(\bm{x}_j,\cdot),K_{\alpha,a,d}(\bm{x}_k,\cdot)\right>_{L^2([-a,a]^d)}+ \lambda K_{\alpha,a,d}(\bm{x}_j,\bm{x}_k),
\end{align*}
\begin{align}
    b_j = \frac{1}{M}\sum_{m=1}^{M}K_{\alpha,a,d}(\bm{x}_j,\widetilde{Y}_{m}).\label{vector_b}
\end{align}}
\STATE{\hspace*{2em} Solve the linear equation $A\bm{c}=\bm{b}$.

\STATE{\hspace*{2em} For the solution $\bm{c}=(c_1,\ldots,c_N)^\top$, compute the density estimator
\begin{equation*}
    \widetilde{f}^\lambda_{\widetilde{\bm{Y}}}(\bm{x})=\sum_{n=1}^{N}c_nK_{\alpha,a,d}(\bm{x}_n,\bm{x}).
\end{equation*}}

\RETURN Density estimator 
\[
    f^{\lambda}_{\bm{Y}}(\bm{x}):=\begin{cases}\max\{\widetilde{f}^\lambda_{\widetilde{\bm{Y}}}(\bm{x}),0 \},
     &\text{ for }\bm{x}\in[-a,a]^d,\\
    0, &\text{ for }\bm{x}\in\mathbb{R}^d\setminus[-a,a]^d.
    \end{cases}\]
}
\end{algorithmic}
\end{algorithm}

\subsection{Computational complexity of the PSKK method}
In this subsection we analyze the overall computational complexity of the PSKK method. First, the CBC construction in \cite{cools2021fast} for generating the lattice point set $Y^*$ requires $\mathcal{O}(dN\log N)$ operations under product weights in the Korobov space on the unit cube (see \cite[Section 4]{cools2021fast} for detail). Next, constructing the circulant matrix $A$ involves computing its first row elements $A_{1,k}$ for $k=1,2,\ldots N$ through kernel evaluations in (\ref{kernel_form}) and (\ref{L2_form}), which takes $\mathcal{O}(dN)$ operations. For the vector $\bm{b}$, each entry requires $\mathcal{O}(dM)$ operations according to (\ref{vector_b}), leading to $\mathcal{O}(dMN)$ complexity for all $N$ entries. Finally, solving the linear system $A\bm{c}=\bm{b}$ via fast Fourier transform leverages the circulant structure of $A$, requiring $\mathcal{O}(N\log N)$ operations. Therefore, the total computational complexity of the PSKK method is dominated by 
\begin{equation}
    \mathcal{O}(dN\log N+dN+dMN+N\log N) = \mathcal{O}(dMN).\nonumber
\end{equation}
By selecting $N=\mathcal{O}(M^{\frac{1}{\alpha-\epsilon}})$ for some $\epsilon \in (0,2-1/\alpha)$ as in Algorithm~\ref{alg:buildtree}, the complexity reduces to $\mathcal{O}(dM^{1+\frac{1}{\alpha-\epsilon}})$.

\begin{remark}
    The PSKK estimator is a function defined on $\mathbb{R}^d$. Evaluating this estimator at $L$ distinct points requires $\mathcal{O}(dLN)=\mathcal{O}(dLM^{\frac{1}{\alpha-\epsilon}})$ operations after construction.
\end{remark}

\begin{remark}
    For comparison, the standard KDE method with Scott’s rule takes $\mathcal{O}(dLM)$ operations to evaluate $L$ points. The PSKK method achieves a lower complexity than KDE when $L>cM^\rho$ with some positive constant $c$ and $\rho>\frac{1}{\alpha-\epsilon}$, provided $N=\mathcal{O}(M^{\frac{1}{\alpha-\epsilon}})$ as specified in Algorithm~\ref{alg:buildtree}. This condition ensures $\mathcal{O}(dLN+dMN)< \mathcal{O}(dLM)$, demonstrating the computational efficiency of our method for large $L$. Combined with this low computational complexity, the PSKK method achieves a convergence rate of $\mathcal{O}(M^{-1/(1+1/(2\alpha)+\epsilon)})$, exceeding the $\mathcal{O}(M^{-\frac{4}{d+4}})$ rate of the standard KDE method.
\end{remark}

\section{Numerical experiments}\label{sec6}
In this section we perform numerical experiments to validate the theoretical results. All experiments compare the PSKK method with the standard KDE method, and evaluate their performance through the MISE. We provide detailed steps for calculating MISE in Appendix~\ref{SM_MISE_steps}.

\subsection{Example 1: Two-dimensional Gaussian mixture distribution}

In this subsection, we examine our PSKK method on the two dimensional Gaussian mixture distribution. In particular, we consider the following density function       
\begin{equation}
    f(\bm{x})=\frac{1}{9}\sum_{\bm{\mu}\in\{-2,0,2\}^2}\frac{2}{\pi}e^{-2\Vert \bm{x}-\bm{\mu}\Vert_2^2}.\nonumber
\end{equation}
This density function has nine peaks, with the center of each peak located at the grid points in $\{-2,0,2\}^2$. Now we compare the performance of the KDE method and the PSKK method for this density estimation. For the KDE method, we select the conventional Gaussian kernel. For the PSKK method, we chose $\alpha=2,a=6,N=1009$ and $\lambda=10^{-6}$.

\begin{figure}[htbp]
  \centering
  \includegraphics[width=\linewidth]{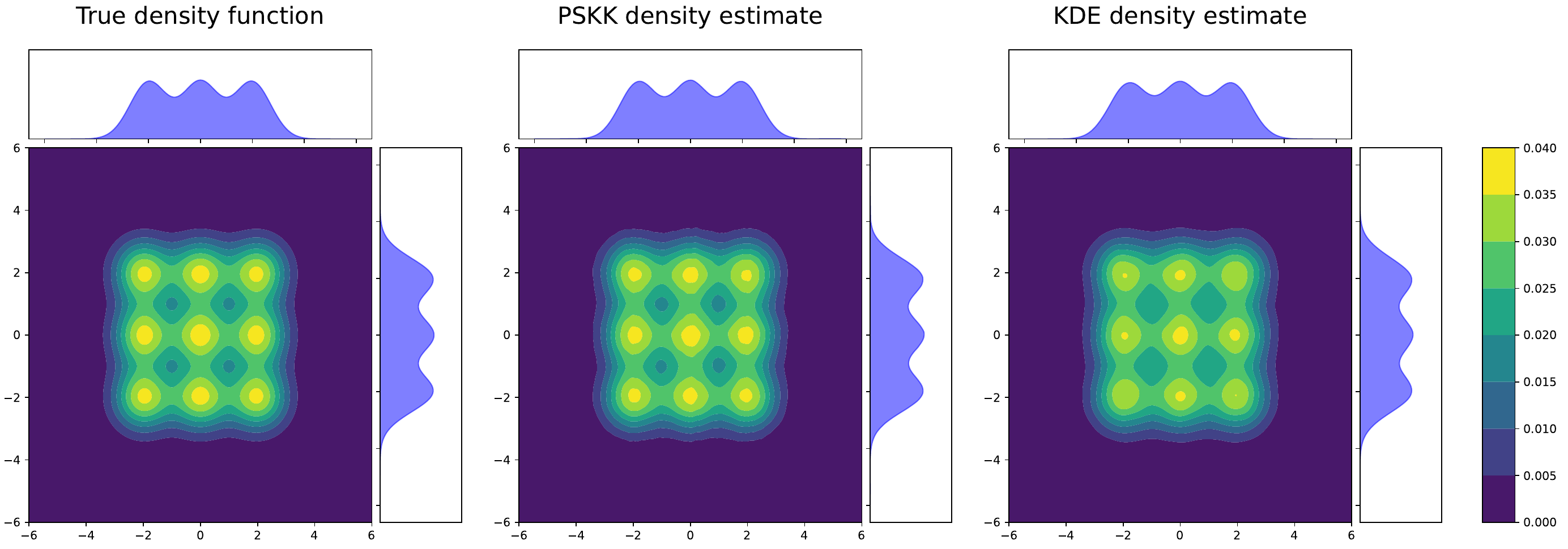}
  \caption{Plot of the density function with marginal distributions for the true density function (left),  the  PSKK density estimate (middle) and the KDE density estimate (right), based on $M=10^6$ samples. The PSKK method sets the parameters $\alpha=2,a=6,N=1009$ and $\lambda=10^{-6}$.}
  \label{fig1:2dim_plt} 
\end{figure}

Figure~\ref{fig1:2dim_plt} shows the true density function (left), the density function estimated by the PSKK method (middle), and the density function estimated by the KDE method (right). Each method employs $M=10^6$ independent samples. It is evident that while both methods perform well in estimating the marginal density functions, the PSKK method more effectively captures the modal information of the target density function, particularly in the peaks (yellow regions) and valleys (dark blue regions).

\begin{figure}[htbp]
  \centering
  \includegraphics[width=0.7\linewidth]{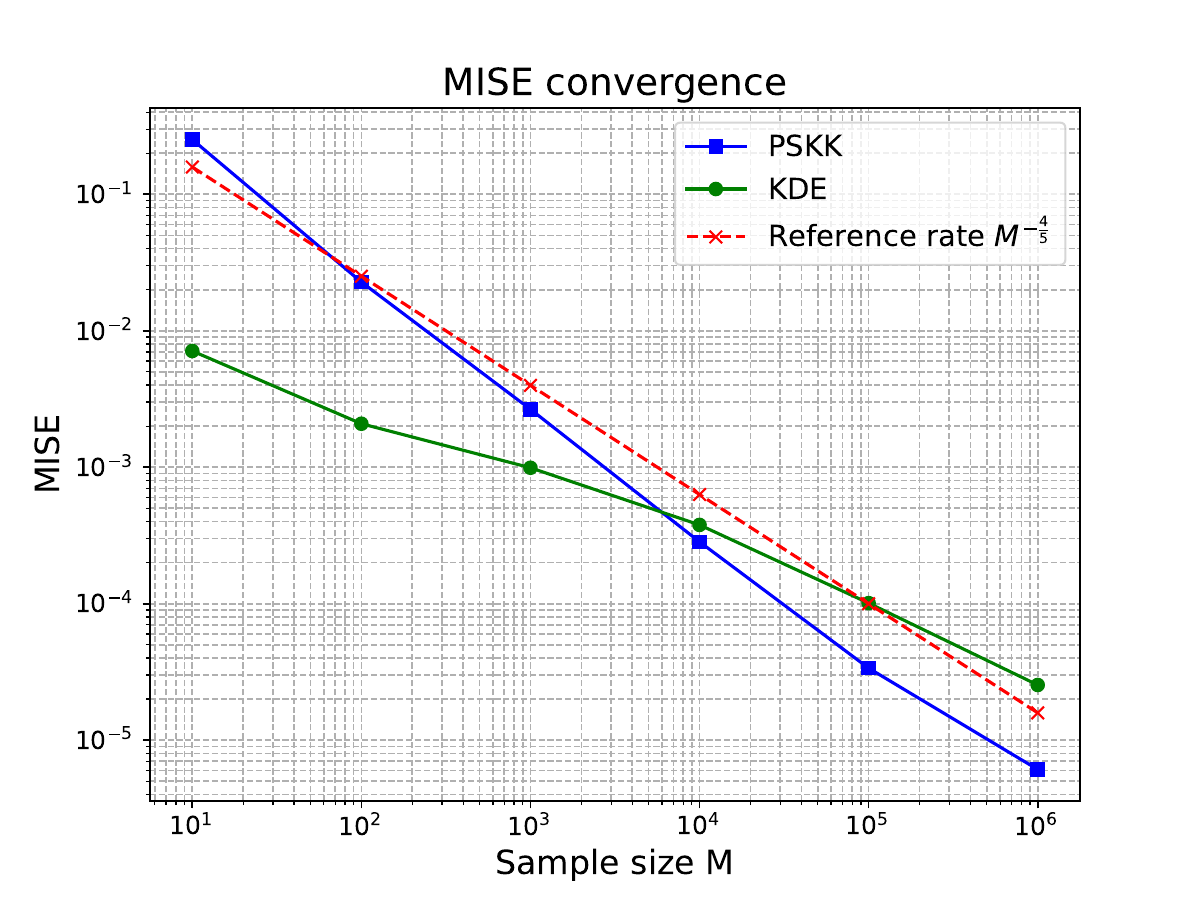}
  \caption{MISE convergence for the KDE method and the PSKK method (with $\alpha=2, a=6, N=1009$, and $ \lambda=10^{-6}$) in the $2$-dimensional Gaussian mixture example. }
  \label{fig2:2dim_MISE} 
\end{figure}

We also evaluate the convergence of the MISE for both the KDE and the PSKK methods using sample sizes of $M=10,10^2,\ldots,10^6$.  As illustrated in Figure~\ref{fig2:2dim_MISE}, the PSKK method, under the chosen parameters, exhibits a lower MISE than the KDE method when the sample size exceeds $10^4$, thereby highlighting its superior performance with large samples. Furthermore, when $\alpha=2$, the PSKK method in this example exhibits a higher MISE convergence rate than the theoretical upper bound of $\mathcal{O}(M^{-\frac{4}{5}})$.

\subsection{Example 2: 4-dimensional Gaussian mixture distribution}\label{Example2}
In this subsection we consider the 4-dimensional Gaussian mixture distribution with the density function
\begin{equation*}
    f(\bm{x})=\frac{1}{9}\sum_{k=0}^{8}\frac{1}{4\pi^2\sigma^4}e^{-\frac{\Vert \bm{x}-\bm{\mu}_k\Vert_2^2}{2\sigma^2}},
\end{equation*}
where $\sigma=0.7$ and for $0\le k\le 8$,
\begin{equation}
    \bm{\mu}_{k}=\left(\left\{\frac{k}{9}\right\}-\frac{4}{9},\left\{\frac{2k}{9}\right\}-\frac{4}{9},\left\{\frac{4k}{9}\right\}-\frac{4}{9},\left\{\frac{8k}{9}\right\}-\frac{4}{9}\right)^\top.\nonumber
\end{equation}
We aim to verify the convergence rate of the MISE of the PSKK estimator established in Theorem~\ref{MISE_Rd_sim}, using the parameter selection strategy specified in Algorithm~\ref{alg:buildtree}.

\begin{figure}[htbp]
  \centering
  \includegraphics[width=0.7\linewidth]{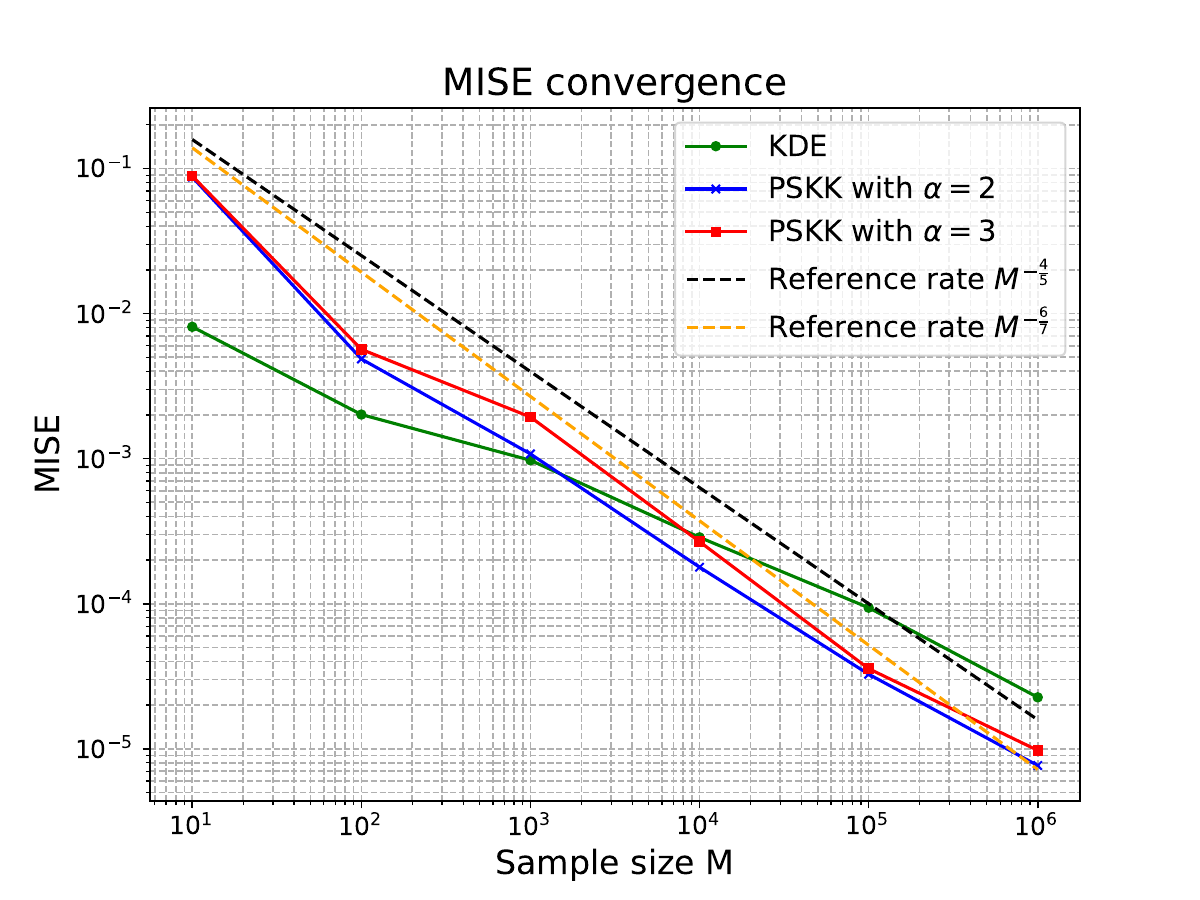}
  \caption{MISE convergence for the KDE method and the PSKK method (with $\alpha=2,3,a=\sqrt{(\log M-1)/2},N=\mathcal{O}(3\sqrt{M}),$ and $\lambda=0.1M^{-1/(1+1/(2\alpha))}$) in the $4$-dimensional Gaussian mixture example. }
  \label{fig6:4dim_fin} 
\end{figure}

Figure~\ref{fig6:4dim_fin} illustrates the MISE decay for the PSKK method with $a=a(M),N=N(M,\alpha)$ and $\lambda=\lambda(M,\alpha)$ for $\alpha=2,3$ as specified in Algorithm~\ref{alg:buildtree}. Specifically, we take $a = \sqrt{(\log M-1)/2}$ and $\lambda=0.1M^{-1/(1+1/(2\alpha))}$ for $M=10,10^2,\ldots,10^6$. For the selection of $N$, we choose $N$ to be approximately $3\sqrt{M}$ prime numbers. For $M=10,10^2,\ldots,10^6$, $N$ is set to $11,31,97,307,947,3001$, respectively. In both cases of $\alpha=2$ and $\alpha=3$, the MISE of the PSKK method exhibits a $\mathcal{O}(M^{-\frac{1}{1+1/(2\alpha)}})$ decay, which is consistent with our theory. Additionally, we evaluated the MISE decay for the KDE method. The results indicate that, as the sample size $M$ increases, the PSKK method yields a smaller MISE compared to the KDE method.

We also investigate the impact of parameters $a, N,$ and $\lambda$ on the MISE of the PSKK estimator and conduct similar experiments for higher-dimensional situations. The detailed results of the parameter impact experiments and the higher-dimensional experiments are provided in Appendix~\ref{SM_ex2_parameters} and Appendix~\ref{SM_higher_examples}, respectively.

\section{Conclusion}\label{sec7}
In this paper, we propose the periodic scaled Korobov kernel method for density estimation on 
$\mathbb{R}^d$ under the exponential decay condition. By employing a modulo operation to map samples into the truncated domain $[-a,a]^d$, the PSKK method eliminates the need for compact support or inherent periodicity assumptions required by the kernel method in \cite{kazashi2023density}. Theoretically, we prove that the PSKK estimator achieves an MISE bound of $\mathcal{O}(|\log M|^{2(\alpha+1)d/q}M^{-1/(1+1/(2\alpha)+\epsilon)})$ for densities with exponential decay up to order $\alpha$, where $q\ge 1$ is a parameter related to the decay of the density function and $\epsilon>0$ is arbitrarily small. Numerical experiments confirm the asymptotic convergence rate of the PSKK method and demonstrate significant improvements over the standard KDE method, especially for large sample sizes.

Some challenges remain to be addressed in future research. First, the MISE bound for our method suffers from a logarithmic factor whose exponent increases with the dimension $d$. This intrinsic dependence on $d$, which cannot be resolved by working with weighted function spaces, may limit the effectiveness of the method in high-dimensional settings.
Second, the number of pre-selected points $N$ is important for our method. Empirically, a value of $N$ approximately $1000$ is typically required to ensure a small error. Therefore, it is necessary to develop a more efficient kernel approximation in the RKHS to resolve the accuracy-complexity trade-off. Third, the scaling parameter $a$ requires precise calibration---both undersized and over-sized $a$ values destabilize the MISE convergence. This highlights the need for creating suitable selection strategies for the scaling parameter.

\section*{Acknowledgments}
The authors sincerely appreciate the anonymous referees for their valuable suggestions and comments, which significantly improved the quality of this paper.

\appendix
\section[Proof of Lemma]{Proof of Lemma~\ref{point_evaluation_functional}}\label{SM_point_evaluation_functional}
\begin{proof}
    For $v\in\mathcal{N}^\tau(K_{\alpha,a,d})$, we have 
    \begin{equation}
    \begin{aligned}
        |\delta_{\bm{x}}(v)|&=\left|\sum_{\bm{h}\in\mathbb{Z}^d}\widehat{v}_{a,d}(\bm{h})\varphi_{a,d,\bm{h}}(\bm{x})\right|\\
        &\le (2a)^{-\frac{d}{2}} \left|\sum_{\bm{h}\in\mathbb{Z}^d}\widehat{v}_{a,d}(\bm{h})\right|   \\
        &\le (2a)^{-\frac{d}{2}}\left( \sum_{\bm{h}\in\mathbb{Z}^d}r_{\alpha,a,d}(\bm{h})^{-2\tau} \right)^{\frac{1}{2}}
        \left(\sum_{\bm{h}\in\mathbb{Z}^d} r_{\alpha,a,d}(\bm{h})^{2\tau}\left|\widehat{v}_{a,d}(\bm{h})\right|^2 \right)^{\frac{1}{2}}\\
        &=(2a)^{-\frac{d}{2}}\left( \sum_{\bm{h}\in\mathbb{Z}^d}r_{\alpha,a,d}(\bm{h})^{-2\tau} \right)^{\frac{1}{2}} \Vert v\Vert_{\mathcal{N}^\tau(K_{\alpha,a,d})},\nonumber
    \end{aligned}
    \end{equation}
    where in the second inequality we use $\varphi_{a,d,\bm{h}}(\bm{x})\le (2a)^{-\frac{d}{2}},\forall \bm{x}\in[-a,a]^d$. Since $\tau>\frac{1}{2\alpha}$, we have
    \begin{equation}
    \begin{aligned}
        \sum_{\bm{h}\in\mathbb{Z}^d}r_{\alpha,a,d}(\bm{h})^{-2\tau}
        &=\left(r_{\alpha,a}(0)^{-2\tau}+2\sum_{h=1}^{\infty}r_{\alpha,a}(h)^{-2\tau} \right)^d\\
        &=\left((2a)^{-\tau}+2\left(\frac{a}{\pi}\right)^{2\alpha\tau}\zeta(2\alpha\tau)\right)^{d}<\infty,\nonumber
    \end{aligned}
    \end{equation}
    where $\zeta(s):=\sum_{n=1}^\infty \frac{1}{n^s}$ for $s>1$ is the Riemann zeta function. Therefore, $\delta_{\bm{x}}\in\mathcal{N}^{-\tau}(K_{\alpha,a,d})$ holds for any $\bm{x}\in[-a,a]^d$.
\end{proof}

\section[Proof of Lemma]{Proof of Lemma~\ref{F}}\label{SM_F}
\begin{proof}
    \begin{equation}
    \begin{aligned}
        \Vert F\Vert_{\mathcal{N}^{-\tau}(K_{\alpha,a,d})}^2
        &=\sum_{\bm{h}\in\mathbb{Z}^d}r_{\alpha,a,d}(\bm{h})^{-2\tau}\left|\int_{[-a,a]^d}f(\bm{x}) \varphi_{a,d,\bm{h}}(\bm{x})d\bm{x}\right|^2\\
        &\le\sum_{\bm{h}\in\mathbb{Z}^d}r_{\alpha,a,d}(\bm{h})^{-2\tau}(2a)^{-d}\left|\int_{[-a,a]^d}f(\bm{x})d\bm{x}\right|^2\\
        &=(2a)^{-d}\sum_{\bm{h}\in\mathbb{Z}^d}r_{\alpha,a,d}(\bm{h})^{-2\tau}\\
        &=\left((2a)^{-\tau-1}+\frac{1}{a}\left(\frac{a}{\pi}\right)^{2\alpha\tau}\zeta(2\alpha\tau)\right)^{d}<\infty,\nonumber
    \end{aligned}
    \end{equation}
    where in the third equality we use the fact that $f$ is a density function on $[-a,a]^d$.
\end{proof}

\section[Proof of Lemma]{Proof of Lemma~\ref{L2_K_tau}}\label{SM_L2_K_tau}
\begin{proof}
    \begin{align}
        \left\langle K_\tau(\cdot,\cdot),f(\cdot)\right\rangle_{L^2([-\bm{a},\bm{a}])}
        &=(2a)^{-d}\sum_{\bm{h}\in\mathbb{Z}^d}r_{\alpha,a,d}(\bm{h})^{-2\tau}\int_{[-a,a]^d}f(\bm{x})d\bm{x}\nonumber\\
        &=(2a)^{-d}\sum_{\bm{h}\in\mathbb{Z}^d}r_{\alpha,a,d}(\bm{h})^{-2\tau}\nonumber\\
        &=\left((2a)^{-\tau-1}+\frac{1}{a}\left(\frac{a}{\pi}\right)^{2\alpha\tau}\zeta(2\alpha\tau)\right)^{d}<\infty.\nonumber    
    \end{align}
\end{proof}

\section[Proof of Prop]{Proof of Proposition~\ref{L_2_A_N}}\label{SM_proof_L_2_A_N}
\begin{proof}
    For any $\phi\in\mathcal{H}(K_{\alpha,a,d})$, let
    $h(\bm{y}) = \phi\circ\mathcal{J}_a(\bm{y})=\phi(2a\bm{y}-\bm{a})$, where $\bm{a}=(a,\ldots,a)^\top$. Let $h_N = B_N^*(h)$ and let $\phi_N = A_N^*(\phi)$.
    Then $\phi = h\circ\mathcal{J}_a^{-1}, \phi_N = h_N\circ\mathcal{J}_a^{-1}$, and we have
    \begin{equation}
    \begin{aligned}
        \Vert\phi-A_N^*(\phi)\Vert_{L^2([-a,a]^d)}^2
        =&\int_{[-a,a]^d}|\phi(\bm{x})-\phi_N(\bm{x})|^2d\bm{x}\\
        =&\int_{[-a,a]^d}\left|h\circ\mathcal{J}_a^{-1}(\bm{x})-h_N\circ\mathcal{J}_a^{-1}(\bm{x})\right|^2d\bm{x}\\
        =&(2a)^d\int_{[0,1]^d}|h(\bm{y})-h_N(\bm{y})|^2d\bm{y},\nonumber
    \end{aligned}
    \end{equation}
    According to Lemma~\ref{L2_B_N} and inequality~(\ref{norm_h}), we have
    \begin{align*}
        \Vert\phi-A_N^*(\phi)\Vert_{L^2([-a,a]^d)}
        =&(2a)^{d/2}\Vert h-B_N^*(h)\Vert_{L^2([0,1]^d)}\\
        \le&(2a)^{d/2}C_{\alpha,\delta,d}\Vert h\Vert_{K_{\alpha,d}}N^{-(\alpha/2-\delta)}\\
        \le&C_{\alpha,\delta,d}\Vert \phi\Vert_{K_{\alpha,a,d}}(2a)^{\alpha d} N^{-(\alpha/2-\delta)}.
    \end{align*}
    
\end{proof}

\section{Computing the elements of $A$}\label{SM_computing_A}
Here we introduce how to calculate $A$ mentioned in Algorithm~\ref{alg:buildtree} of the form $A=(A_{j,k})_{1\le j,k\le N}$ with 
\begin{align}
    A_{j,k}=&\left<K_{\alpha,a,d}(\bm{x}_j,\cdot),K_{\alpha,a,d}(\bm{x}_k,\cdot)\right>_{L^2([-a,a]^d)}+ \lambda K_{\alpha,a,d}(\bm{x}_j,\bm{x}_k).\label{A_jk}
\end{align}

We note that for any $k\in\mathbb{N},k\ge 2$, the periodic Bernoulli polynomial of degree $k$ has the following form (see \cite{NIST_DLMF_2019} for more details)
\begin{equation}
    \frac{\widetilde{B}_{k}(x)}{k!}=\frac{-1}{(2\pi i)^{k}}\sum_{h\in\mathbb{Z}\setminus\{0\}}\frac{e^{2\pi i h x}}{h^{k}},\quad\forall x\in\mathbb{R}.\nonumber
\end{equation}
And for any $k\in\mathbb{N}$, $x\in [0,1]$, $\widetilde{B}_{2k}(x) = B_{2k}(|x|)$.
So we have
\begin{align}
K_{\alpha,a,d}(\bm{x},\bm{y})=&\prod_{j=1}^{d}\left(\frac{1}{(2a)^2}+\frac{(-1)^{\alpha+1}(2a)^{2\alpha-1}}{(2\alpha)!}B_{2\alpha}\left(\frac{|x_j-y_j|}{2a}\right) \right)\nonumber\\
   =&\prod_{j=1}^{d}\left(\frac{1}{(2a)^2}+\sum_{h\in\mathbb{Z}\setminus\{0\}}\frac{(2a)^{2\alpha-1}}{|2\pi h|^{2\alpha}} \exp\left(2\pi ih\frac{x_j-y_j}{2a}\right)\right).\label{kernel_form}    
\end{align}
Thus
\begin{align}
&\left\langle K_{\alpha,a,d}(\bm{x},\cdot),K_{\alpha,a,d}(\bm{y},\cdot)\right\rangle_{L^2([-\bm{a},\bm{a}])}\nonumber\\
   =&\prod_{j=1}^{d}\left[\int_{-a}^{a}\left(\frac{1}{(2a)^2}+\sum_{h\in\mathbb{Z}\setminus\{0\}}\frac{(2a)^{2\alpha-1}}{|2\pi h|^{2\alpha}} \exp\left(2\pi ih\frac{x_j-z_j}{2a}\right)\right)\right.\nonumber\\
   &\left.\quad \quad\times\left(\frac{1}{(2a)^2}+\sum_{h\in\mathbb{Z}\setminus\{0\}}\frac{(2a)^{2\alpha-1}}{|2\pi h|^{2\alpha}} \exp\left(2\pi ih\frac{z_j-y_j}{2a}\right)\right)dz_j\right]\nonumber\\
   =&\prod_{j=1}^{d}\left((2a)^{-3}+\sum_{h\in\mathbb{Z}\setminus\{0\}}\frac{(2a)^{4\alpha-1}}{|2\pi h|^{4\alpha}}\exp\left( 2\pi i h\frac{x_j-y_j}{2a}\right) \right)\nonumber\\
   =&\prod_{j=1}^{d}\left((2a)^{-3}-\frac{(2a)^{4\alpha-1}}{(4\alpha)!}B_{4\alpha}\left(\frac{|x_j-y_j|}{2a}\right) \right).\label{L2_form}
\end{align}

\begin{remark}\label{remark_A}
    According to (\ref{kernel_form}), (\ref{L2_form}) and (\ref{A_jk}), when the point set $X^*$ is taken as scaled lattice points, the matrix $A$ becomes a circular matrix, and thus equation (\ref{linear system}) can be solved quickly using the fast Fourier transform \cite{duhamel1990fast}.
\end{remark}

\section{Detailed steps for calculating MISE}\label{SM_MISE_steps}
For the KDE estimator $f^\mathrm{KDE}_{\bm{Y}}$ based on the sample set $\bm{Y}=\{Y_1,\ldots,Y_M\}$, the MISE is estimated as 
\begin{align}
\mathrm{MISE}\left(f^\mathrm{KDE}_{\bm{Y}}\right):&=\mathbb{E}\left[\int_{\mathbb{R}^d}|f^\mathrm{KDE}_{\bm{Y}}(\bm{x})-f(\bm{x})|^2d\bm{x}\right]\nonumber\\
&\approx \mathbb{E}\left[\int_{[-l,l]^d}|f^\mathrm{KDE}_{\bm{Y}}(\bm{x})-f(\bm{x})|^2d\bm{x}\right]\nonumber\\
&\approx \frac{(2l)^d}{S}\sum_{s=1}^{S}\frac{1}{2^t}\sum_{k=1}^{2^t}|f^\mathrm{KDE}_{\bm{Y}^{(s)}}(\bm{p}^l_k)-f(\bm{p}^l_k)|^2, \label{KDE_mise}     
\end{align}
where $l>0$ is a truncation parameter, $\bm{Y}^{(1)},\ldots,\bm{Y}^{(S)}$ are Monte Carlo replications of the sample set and 
\begin{equation}
    \bm{p}^l_k=2l\bm{w}_k - l\mathbf{1}\in[-l,l]^d,\quad k=1,2,\ldots,2^t,\nonumber
\end{equation}
with $\mathbf{1}$ being the $d$-dimensional all-ones vector and $\{\bm{w}_k\}_{k=1}^{2^t}$ being the first $2^t$ points of the $d$-dimensional Sobol' sequence, which is mentioned in \cite{dick2013high}.

For the PSKK estimator $f^{\lambda}_{\bm{Y}}$,  we decompose the MISE as
\begin{equation}
\begin{aligned}
\mathrm{MISE}\left(f^{\lambda}_{\bm{Y}}\right):&=\mathbb{E}\left[\int_{\mathbb{R}^d}|f^{\lambda}_{\bm{Y}}(\bm{x})-f(\bm{x})|^2d\bm{x}\right]\\
&=\mathbb{E}\left[\int_{[-a,a]^d}|f^{\lambda}_{\bm{Y}}(\bm{x})-f(\bm{x})|^2d\bm{x}\right]+\int_{\mathbb{R}^d\setminus[-a,a]^d}f(\bm{x})^2d\bm{x}.  \nonumber
\end{aligned}
\end{equation}
The first term on the right hand side is estimated via
\begin{equation}
\mathbb{E}\left[\int_{[-a,a]^d}|f^{\lambda}_{\bm{Y}}(\bm{x})-f(\bm{x})|^2d\bm{x}\right]\approx 
\frac{(2a)^d}{S}\sum_{s=1}^{S}\frac{1}{2^t}\sum_{k=1}^{2^t}|f^{\lambda}_{\bm{Y}^{(s)}}(\bm{p}^a_k)-f(\bm{p}^a_k)|^2,
\end{equation}
where $\bm{Y}^{(s)}$ for $s=1,\ldots,S$ and $\bm{p}^a_k$ for $k=1,\ldots,2^t$ follow the same definitions as in (\ref{KDE_mise}). For the second term, we compute it using $10^8$ Monte Carlo samples drawn from $f(\bm{x})$.

In our numerical experiments, we set $l=6,t=16$ and $S=20$. The truncation parameter $l$ for the MISE of KDE ensures that the MISE truncation error is at least one order of magnitude smaller than the estimated MISE of KDE, while $S$ guarantees that the Monte Carlo confidence interval width is at least ten times smaller than the MISE value.

\section{Supplement to Example 2: Impact of parameters $a, N,$ and $\lambda$ on MISE}\label{SM_ex2_parameters}
This supplement extends the 4-dimensional Gaussian mixture density estimation study from Example 2 (Subsection~\ref{Example2}). Using the same target density, we systematically examine how parameters $\alpha$, $a$, $N$, and $\lambda$ influence the MISE of the PSKK estimator.

\begin{figure}[htbp]
  \centering
  \includegraphics[width=\linewidth]{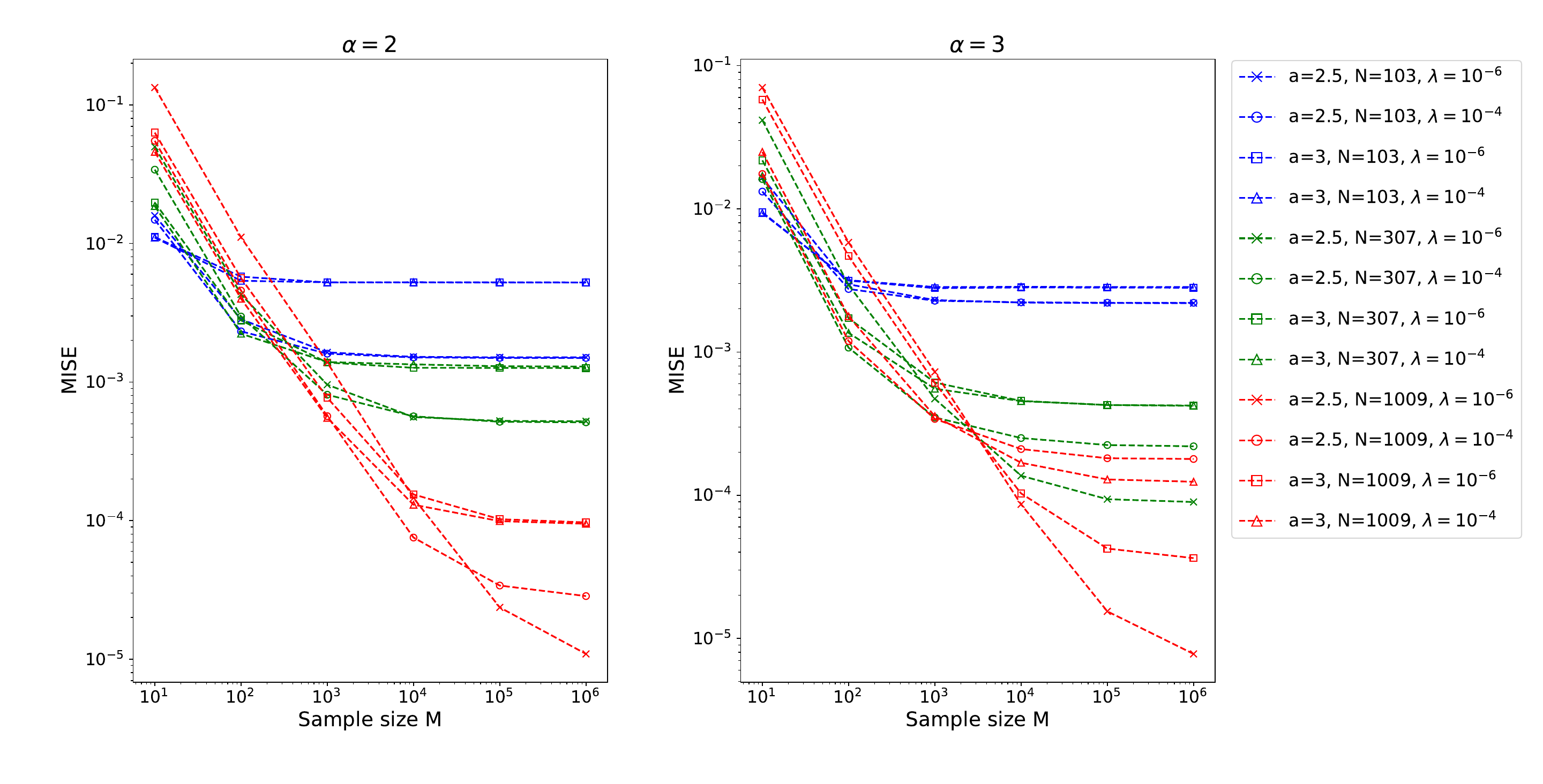}
  \caption{Plot of MISE convergence of the PSKK method for $N=103, 307, 1009,a=2.5,3,$ and $\lambda=10^{-4},10^{-6}$, with $\alpha=2$ (left), $3$ (right).  }
  \label{fig3:4dim_N} 
\end{figure}

Figure~\ref{fig3:4dim_N} shows the decay of MISE of the PSKK method for $N=103$, $ 307$, $1009$, with $a=2.5,3$ and $\lambda=10^{-4},10^{-6}$. In the left panel, we set $\alpha=2$, while in the right panel $\alpha=3$ is used. It is evident that, with a sufficiently large sample size $M$, the MISE of the PSKK method diminishes as $N$ increases. However, we report that increasing $N$ beyond $N=1009$ did not help decrease the error. We interpret this as the term involving $N$ in the MISE bound~(\ref{MISE_with_par}) being negligible when $N$ exceeds $1009$ under the current sample size.

\begin{figure}[htbp]
  \centering
  \includegraphics[width=\linewidth]{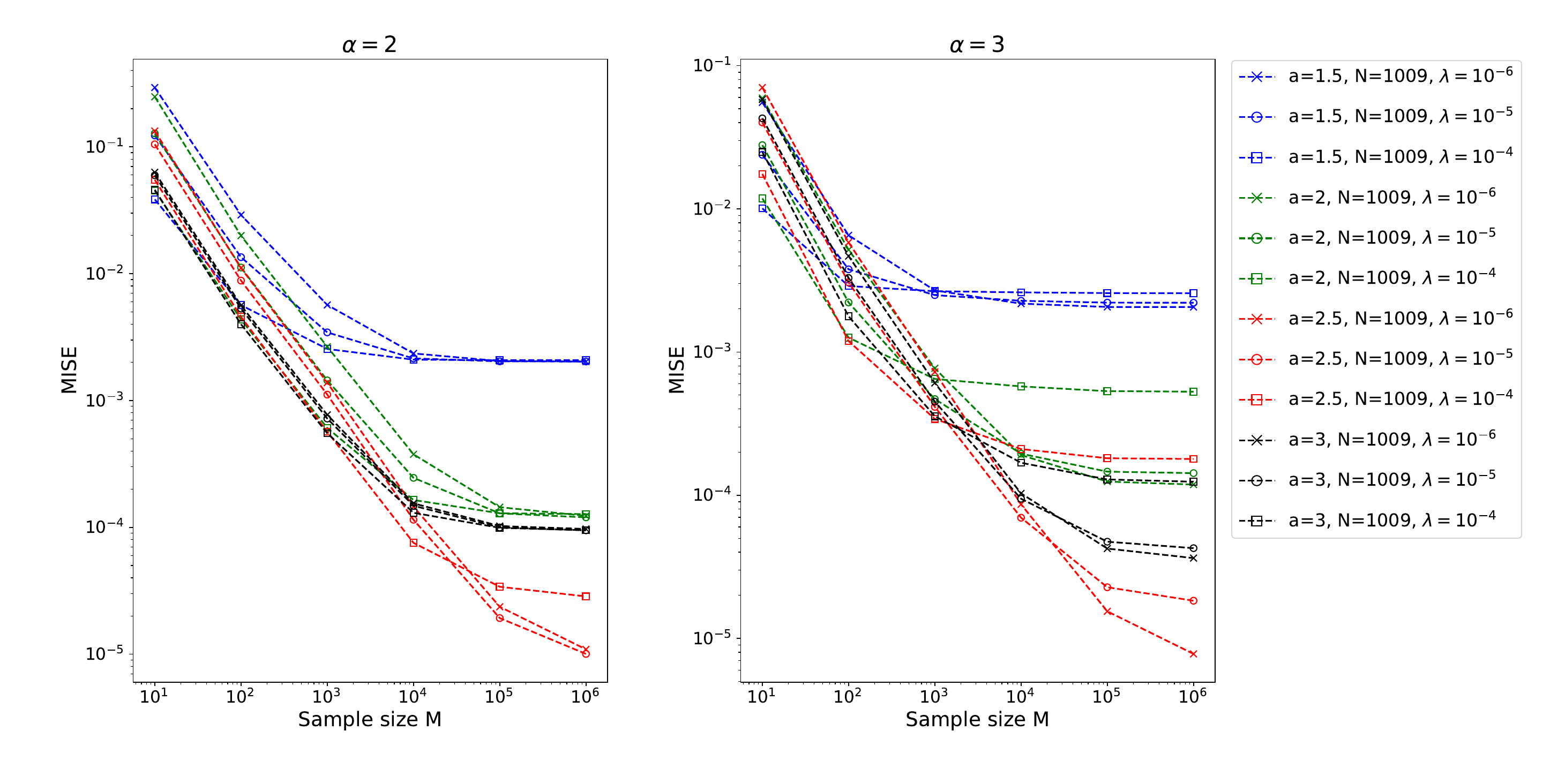}
  \caption{Plot of MISE convergence of the PSKK method for $N=1009,a=1.5,2,2.5,3,$ and $\lambda=10^{-4},10^{-5},10^{-6}$, with $\alpha=2$ (left), $3$ (right).}
  \label{fig4:4dim_a} 
\end{figure}

Next, we analyze the behavior of the MISE for varying values of $a$ while keeping $N$ fixed. Figure~\ref{fig4:4dim_a} illustrates the MISE decay of the PSKK method for $N = 1009$ and different parameter combinations: $a=1.5,2,2.5,3,\lambda=10^{-4},10^{-5},10^{-6}$ and $\alpha=2,3$. It can be observed that in the $4$-dimensional example, the MISE is highly sensitive to the parameter $a$. Excessively large or small values of $a$ can result in an increased MISE. Specifically, smaller values of $a$ lead to a significant difference between $f$ and $\widetilde{f}$, and the support of the estimator becomes relatively small, thereby increasing truncation errors. Conversely, larger values of $a$ increase the projection error and variance bound. For this particular example, when dealing with larger sample sizes, a recommended value for $a$ is approximately $2.5$.

\begin{figure}[htbp]
  \centering
  \includegraphics[width=\linewidth]{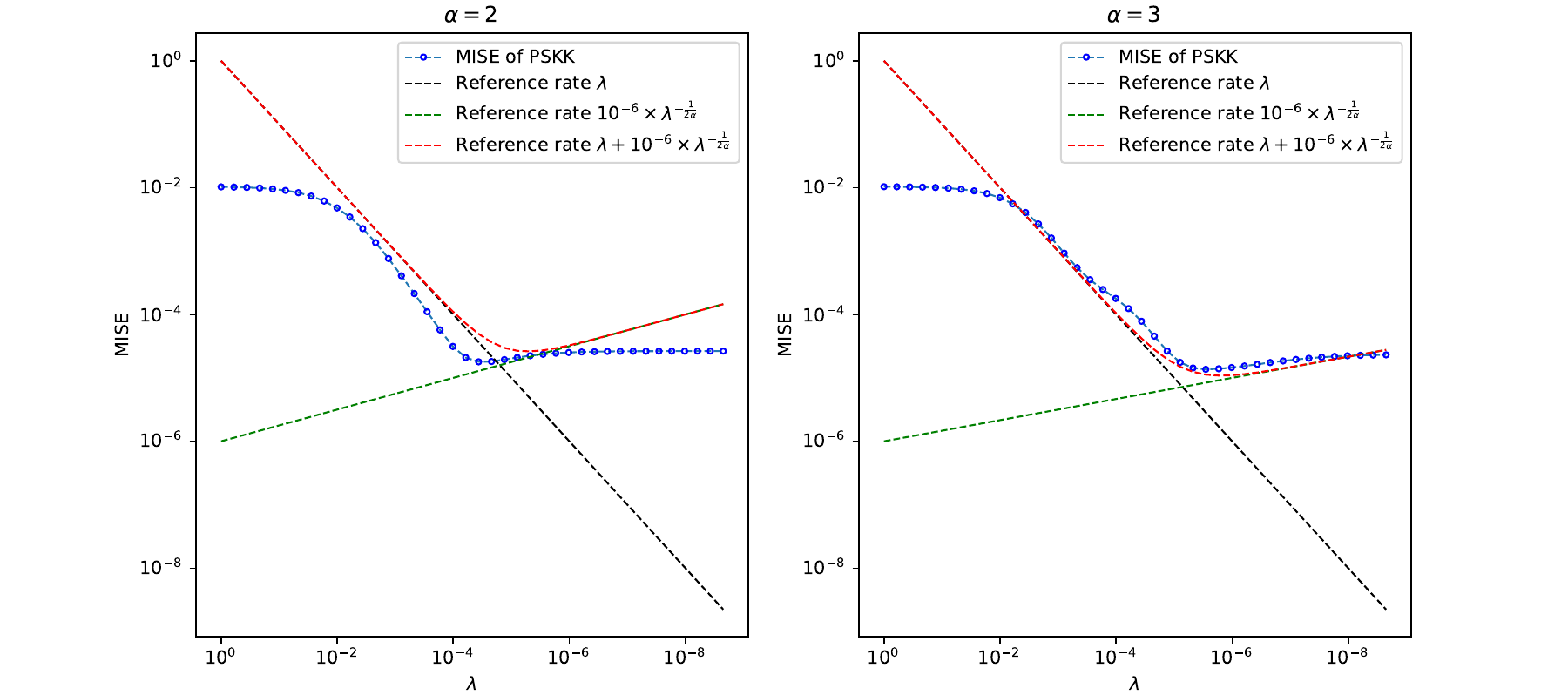}
  \caption{Plot of MISE for $\alpha=2$ (left), $3$ (right), $N=1009,a=2.5,$ and $M=10^5$ with $\lambda=0.6^k,k=0,1,\ldots,39$.}
  \label{fig5:4dim_lambda} 
\end{figure}
In Figure~\ref{fig5:4dim_lambda}, we fix $a=2.5,N=1009,M=10^5$ and analyze the relationship between MISE and $\lambda$. For $\alpha=2$ and $\alpha = 3$, we compute the MISE of the PSKK method for $\lambda=0.6^k$, where $k=0,1,\ldots,39$. In the case of $\alpha=3$ (see the right panel of Figure~\ref{fig5:4dim_lambda}), from $\lambda=0.6^8\approx0.017$ to $\lambda=0.6^{25}\approx 2.8\times 10^{-6}$, we see the MISE decays almost linearly in $\lambda$. Then, from $\lambda=0.6^{25}\approx 2.8\times 10^{-6}$ to $\lambda=0.6^{39}\approx 2.2\times 10^{-9}$, the MISE increases at a rate of $10^{-6}\lambda^{-\frac{1}{2\alpha}}=10^{-6}\lambda^{-\frac{1}{6}}$. The MISE is approximately controlled by $\lambda + 10^{-6}\lambda^{-\frac{1}{2\alpha}}$. For the case where $\alpha=2$ (see the left panel of Figure~\ref{fig5:4dim_lambda}), we observed similar results. These findings agree with the relationship between the MISE bound and $\lambda$ as outlined in (\ref{MISE_with_par}). Furthermore, we observe that for sufficiently small $\lambda$, the MISE increases slowly as $\lambda$ decreases.
Therefore, selecting a $\lambda$ smaller than the inflection point is acceptable in practice.

\section{Higher dimensional examples}\label{SM_higher_examples}
In this section, we consider higher dimensional Gaussian mixture distributions. We compare the MISE between the PSKK method and the KDE method using a density function similar to that in Subsection~\ref{Example2}, but with the component means $\bm{\mu}_k$ adapted to the $d$-dimensional space. Specifically, the modified means are:

\begin{equation}
    \bm{\mu}_{k}^{(4)}=\left(\left\{\frac{k}{9}\right\}-\frac{4}{9},\left\{\frac{2k}{9}\right\}-\frac{4}{9},\left\{\frac{4k}{9}\right\}-\frac{4}{9},\left\{\frac{8k}{9}\right\}-\frac{4}{9}\right)^\top,\nonumber
\end{equation}
\begin{equation}
    \bm{\mu}_{k}^{(5)}=\left(\left\{\frac{k}{9}\right\}-\frac{4}{9},\left\{\frac{2k}{9}\right\}-\frac{4}{9},\left\{\frac{4k}{9}\right\}-\frac{4}{9},\left\{\frac{6k}{9}\right\}-\frac{4}{9},\left\{\frac{8k}{9}\right\}-\frac{4}{9}\right)^\top,\nonumber
\end{equation}
\begin{equation}
    \bm{\mu}_{k}^{(6)}=\left(\left\{\frac{k}{9}\right\}-\frac{4}{9},\left\{\frac{2k}{9}\right\}-\frac{4}{9},\left\{\frac{4k}{9}\right\}-\frac{4}{9},\left\{\frac{6k}{9}\right\}-\frac{4}{9},\left\{\frac{7k}{9}\right\}-\frac{4}{9},\left\{\frac{8k}{9}\right\}-\frac{4}{9}\right)^\top.\nonumber
\end{equation}

\begin{table}[htbp]
  \centering
  \caption{MISE of PSKK and KDE}\label{tab:MISE of PSKK and KDE}
  \resizebox{0.9\textwidth}{!}{
  \begin{tabular}{ccccccc}
    \toprule
    \multirow{2}{*}{$M$} & \multicolumn{2}{c}{$d=4$} & \multicolumn{2}{c}{$d=5$} & \multicolumn{2}{c}{$d=6$} \\
    \cmidrule(lr){2-3} \cmidrule(lr){4-5} \cmidrule(lr){6-7}
                          & PSKK & KDE & PSKK & KDE & PSKK & KDE\\
    \midrule
    $10^1$ & $1.75\times 10^{-1}$ & $1.20\times 10^{-2}$ & $8.74\times 10^{-2}$ & $5.99\times 10^{-3}$ & $1.98\times 10^{-2}$ & $1.83\times 10^{-3}$ \\
    $10^2$ & $1.59\times 10^{-2}$ & $1.89\times 10^{-3}$ & $7.62\times 10^{-3}$ & $8.57\times 10^{-4}$ & $2.05\times 10^{-3}$ & $3.68\times 10^{-4}$ \\
    $10^3$ & $1.53\times 10^{-3}$ & $6.11\times 10^{-4}$ & $8.08\times 10^{-4}$ & $3.37\times 10^{-4}$ & $2.02\times 10^{-4}$ & $1.72\times 10^{-4}$ \\
    $10^4$ & $1.57\times 10^{-4}$ & $2.30\times 10^{-4}$ & $1.02\times 10^{-4}$ & $1.44\times 10^{-4}$ & $4.05\times 10^{-5}$ & $7.88\times 10^{-5}$ \\
    $10^5$ & $1.94\times 10^{-5}$ & $7.49\times 10^{-5}$ & $3.39\times 10^{-5}$ & $5.34\times 10^{-5}$ & $2.29\times 10^{-5}$ & $3.43\times 10^{-5}$ \\
    $10^6$ & $5.77\times 10^{-6}$ & $2.48\times 10^{-5}$ & $2.70\times 10^{-5}$ & $2.01\times 10^{-5}$ & $2.09\times 10^{-5}$ & $1.44\times 10^{-5}$ \\
    \bottomrule
  \end{tabular}
  }
\end{table}
Table~\ref{tab:MISE of PSKK and KDE} presents the MISE for both the PSKK method and the KDE method in dimensions $4,5$ and $6$. For the PSKK method, we set $N=1009,\lambda=10^{-6}$ and $\alpha=2$, with truncation $a=2.5$ for the $4$-dimensional case and $a=2$ for higher dimensions. Figures~\ref{fig3:4dim_N} and~\ref{fig4:4dim_a} demonstrate that the MISE of the PSKK method initially follows the theoretical convergence rate until reaching a ``turning point'', after which it stabilizes. This ``turning point'' depends on parameters including $N$, $a$ and $d$. As shown in Table~\ref{tab:MISE of PSKK and KDE}, ``turning point'' occurs earlier with increasing dimensionality. Furthermore, it should be mentioned that the convergence rate of the PSKK method will not change before reaching the ``turning point'' even for higher dimensions. Compared with the KDE method, the PSKK method has advantages in theoretical convergence rate, although achieving these rates requires more careful parameter selection than the simple fixed parameters used here.

\bibliographystyle{siamplain}
\bibliography{references}
\end{document}